\numberwithin{equation}{section}
\theoremstyle{plain}
\newtheorem{theorem}{Theorem}
\newtheorem{corollary}[theorem]{Corollary}
\newtheorem{lemma}[theorem]{Lemma}
\theoremstyle{definition}
\begin{document}
%\begin{frontmatter}
\title[Duality properties of strong isoperimetric inequalities]{Duality properties of strong isoperimetric inequalities on a planar graph and combinatorial curvatures}
\author{Byung-Geun Oh}
\address{Department of Mathematics Education, Hanyang University, 17 Haengdang-dong, Seongdong-gu, Seoul 133-791, Korea}
\email{bgoh@hanyang.ac.kr}
\date{\today}
\subjclass[2000]{Primary 05C10; 05B45; 52C20; Secondary 20F67}
\keywords{isoperimetric inequality, planar graph, combinatorial curvature, Gromov hyperbolicity}

\begin{abstract}
This paper is about hyperbolic properties on planar graphs. First,
we study the relations among various kinds of strong isoperimetric inequalities on planar graphs and their duals.
In particular, we show that a planar graph satisfies a strong isoperimetric inequality if and only if
its dual has the same property, if the graph satisfies some minor regularity conditions and
we choose an appropriate notion of strong isoperimetric inequalities. Second,
we consider planar graphs where negative combinatorial curvatures
dominate, and use the outcomes of the first part
to strengthen the results of Higuchi,  \.{Z}uk, and, especially, Woess. Finally, we study the relations between
Gromov hyperbolicity and strong isoperimetric inequalities on planar graphs, and give a proof that
a planar graph satisfying a proper kind of a strong isoperimetric inequality
must be Gromov hyperbolic if face degrees of the graph are bounded.
We also provide some examples to support our results.
\end{abstract}

\maketitle
\section{Introduction}
The main topic of this paper is strong isoperimetric inequalities on planar graphs, as one can guess from
the title. In fact, we have studied the relations of three kinds of strong isoperimetric inequalities on
planar graphs and their dual graphs, and as an application we have strengthened the results of \cite{Hig, Woe, Zuk}.
We believe that some of our works can be considered a \emph{`similar effort'} for showing that
a planar graph satisfies a strong isoperimetric inequality if and only of its dual has the same property,
as suggested in \cite{Woe}.

To describe our results precisely, let $G$ be a connected simple planar graph embedded into $\mathbb{R}^2$ locally finitely
such that its dual graph $G^*$ is also simple. See Section~\ref{S2} for details of the terminologies.
We denote by $V(G), E(G)$, and $F(G)$ the vertex set,
the edge set, and the face set, respectively, of $G$. For each $v \in V(G)$, $\deg (v)$ is the number of edges
with one end at $v$. Similarly for each $f \in F(G)$, $\deg(f)$ is the number of edges surrounding $f$.
In this paper we assume that $3 \leq \deg(v), \deg(f) < \infty$
for every $v \in V(G)$ and $f \in F(G)$, unless otherwise stated.

Next suppose $S$ is a finite subgraph of $G$,
and we consider three types of boundaries of $S$. The first one is $\partial S$,
the set of edges in $E(G)$ such that each element of $\partial S$ has
one end on $V(S)$ and the other end on $V(G) \setminus V(S)$.
The second one is $\partial_v S \subset V(S)$, each of whose element is an end vertex of some edges in $\partial S$.
The last boundary $\partial_e S$ is the set of edges surrounding $S$; i.e., $e \in \partial_e S$
if and only if $e \in E(S)$ and $e$ belongs to $E(f)$ for some $f \in F(G) \setminus F(S)$.
Now we define three different strong isoperimetric constants by
\begin{equation}\label{iso1}
\imath (G) := \inf_S \frac{|\partial S|}{\mbox{Vol}(S)}, \qquad
\jmath (G) := \inf_S \frac{|\partial_v S|}{|V(S)|}, \qquad
\kappa (G) := \inf_S \frac{|\partial_e S|}{|F(S)|},
\end{equation}
where $|\cdot|$ denotes the cardinality,
$\mbox{Vol}(S) = \sum_{v \in V(S)} \deg(v)$, and $S$ runs over all the nonempty finite subgraphs of $G$.

The above constants $\imath(G), \jmath(G), \kappa(G)$  characterize some properties of the edge set,
the vertex set, and the face set, respectively,
of $G$, and are discrete analogues of the Cheeger's constant \cite{Che}.
The condition $\imath(G)>0$ is of particular interest in spectral theory on graphs, since this condition is
equivalent to the positivity of the smallest eigenvalue of the negative Laplacian \cite{Dod, DK}, implying the
simple random walk on $G$ is transient. For more about this subject, see for instance
\cite{BMS, Fu, Gerl, Kel1, Moh, So1, Woe2} and the references therein. The constant $\kappa(G)$ is
essentially dealt with in the
geometric(combinatorial) group theory \cite{GH, Gro}, and it
was also investigated in \cite{HS, LPZ}. The constant $\jmath(G)$ appears in the geometric group theory as well,
and early versions of spectral theory on graphs \cite{Dod, So2}. Note that $\jmath(G)$ is quantitatively equivalent to $\imath(G)$ if vertex degrees of $G$ are bounded, and
to $\kappa(G)$ if face degrees of $G$ are bounded (cf. Theorem~\ref{T} below).

We will call a simple planar graph \emph{proper} if every face of the graph is a topological closed disk.
Now we are ready to describe our main result.

\begin{theorem}\label{T}
Suppose $G$ is a proper planar graph as described above, and $G^*$ is its dual graph. Then
\begin{enumerate}[(a)]
\item $\imath(G) > 0$ if and only if $\kappa(G^*) >0$, and $\kappa(G) > 0$ if and only if $\imath(G^*) >0$;
\item $\jmath(G) >0$ if and only if $\jmath(G^*) >0$;
\item if $\jmath(G) >0$, then $\imath(G) > 0$, $\imath(G^*) >0$, $\kappa(G) > 0$, and $\kappa(G^*) >0$.
\end{enumerate}
\end{theorem}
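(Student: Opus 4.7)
The strategy is to exploit the face-vertex duality between $G$ and $G^*$ together with Euler's formula for simple planar subgraphs, which allows us to handle the case of unbounded vertex and face degrees.

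For part (a), I would use the natural bijection $F(G) \leftrightarrow V(G^*)$ to match each induced subgraph $T \subseteq G^*$ with the face-subgraph $S \subseteq G$ determined by $F(S) = V(T)$. One checks directly that $|V(T)| = |F(S)|$ and $|\partial T| = |\partial_e S|$, since both numbers count the edges of $G$ that separate a face in $F(S)$ from a face outside it. The elementary bound $\mathrm{Vol}(T) \ge 3|V(T)|$, which uses only the minimum vertex degree $\ge 3$ in $G^*$, yields $\kappa(G) \ge 3\,\imath(G^*)$. For the reverse direction, Euler's formula for simple planar subgraphs gives $|E(T)| \le 3|V(T)|$, and combining this with $\mathrm{Vol}(T) = 2|E(T)| + |\partial T|$ produces $\mathrm{Vol}(T) \le 6|V(T)| + |\partial T|$; hence $|\partial T|/\mathrm{Vol}(T) \ge \kappa(G)/(6 + \kappa(G))$. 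Thus $\kappa(G) > 0 \iff \imath(G^*) > 0$. Since $G$ is proper, $G^{**} = G$, and applying the same argument to the pair $(G^*, G)$ delivers $\imath(G) > 0 \iff \kappa(G^*) > 0$.

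For part (c), the same planarity estimate proves $\jmath(G) > 0 \Rightarrow \imath(G) > 0$: from $|\partial S| \ge |\partial_v S| \ge \jmath(G)|V(S)|$ and $\mathrm{Vol}(S) \le 6|V(S)| + |\partial S|$, one obtains $\imath(G) \ge \jmath(G)/(6 + \jmath(G)) > 0$. Part (a) then converts this into $\kappa(G^*) > 0$. Using (b) we also have $\jmath(G^*) > 0$, so the same argument applied to $G^*$ gives $\imath(G^*) > 0$, and then (a) again yields $\kappa(G) > 0$.

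The main obstacle is part (b), since vertex boundaries in $G$ and $G^*$ are not directly related by duality. My plan is to prove $\jmath(G) > 0 \Rightarrow \jmath(G^*) > 0$ by contradiction: given a sequence $T_n \subseteq G^*$ with $|\partial_v T_n|/|V(T_n)| \to 0$, set $\mathcal{F}_n = V(T_n) \subseteq F(G)$ and form the induced subgraph $S_n$ of $G$ on the ``deep interior'' vertex set
\[
V^\circ(\mathcal{F}_n) := \{v \in V(G) : \text{every face incident to } v \text{ belongs to } \mathcal{F}_n\}.
\]
A double count of the vertex-face incidence pairs $(v,f) \in V(G) \times \mathcal{F}_n$, split according to whether $v$ lies in $V^\circ(\mathcal{F}_n)$, on a thin ``boundary layer'', or outside entirely, combined with Euler's formula for the planar region $\bigcup_{f \in \mathcal{F}_n} \bar{f}$, should yield quantitative comparisons of $|V(S_n)|$ with $|V(T_n)|$ and of $|\partial_v S_n|$ with $|\partial_v T_n|$. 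Together these would force $|\partial_v S_n|/|V(S_n)| \to 0$, contradicting $\jmath(G) > 0$. The converse direction follows from $G^{**} = G$. I anticipate that the main technical difficulty will be controlling vertices that meet many faces of $\mathcal{F}_n$ simultaneously when face degrees are unbounded, since such vertices can contribute disproportionately to either side of the comparison.
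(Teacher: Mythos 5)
Your parts (a) and (c) are correct, and they run on essentially the same fuel as the paper's own argument in Section~\ref{a}: the duality correspondence between $\partial_e S$ and the edge boundary of the dual part, together with Euler-type counting ($|E(T)|\le 3|V(T)|$, $\mathrm{Vol}(T)=2|E(T)|+|\partial T|$, the analogue of \eqref{facebound}). The only imprecision is the claimed equalities $|V(T)|=|F(S)|$ and $|\partial T|=|\partial_e S|$: for a face set enclosing single-face holes, or for a general subgraph $S$ having edges bounded by two outside faces, these are only inequalities; but they point in the directions your estimates need, so (a) survives, and your deduction of (c) from (a), (b) and the bound $\mathrm{Vol}(S)\le 6|V(S)|+|\partial S|$ is fine. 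Your overall architecture for (b) --- prove one implication and dualize using $G^{**}=G$ --- also matches the paper, which proves $\jmath(G^*)>0\Rightarrow\jmath(G)>0$ and dualizes.

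The gap is in the engine for (b), which is the whole content of the theorem. Your plan hinges on two quantitative comparisons for the deep-interior subgraph $S_n$ induced on $V^{\circ}(\mathcal{F}_n)$: that $|V(S_n)|$ is at least a fixed proportion of $|V(T_n)|$ (up to boundary terms), and that $|\partial_v S_n|$ is at most a constant times $|\partial_v T_n|$. Neither follows from a vertex--face incidence count plus Euler's formula once vertex \emph{and} face degrees are unbounded. A few vertices of enormous degree can be incident to almost all faces of $\mathcal{F}_n$, so $V^{\circ}(\mathcal{F}_n)$ may be far smaller than $|V(T_n)|$; and a single boundary face of enormous degree, or an outside face touching the region along many pinch vertices, can place an uncontrolled number of vertices in the boundary layer, hence in $\partial_v S_n$, while $|\partial_v T_n|$ stays bounded. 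What incidence counting does give is only $|\partial_v S_n|\le\sum_{f\in\partial_v T_n}\deg f$, i.e.\ a bound by boundary \emph{volume} rather than boundary \emph{cardinality}; upgrading the former to the latter is exactly the difficulty you flag in your final sentence, and it is not a technicality but the heart of the matter --- the pinch-point and huge-degree phenomena illustrated by Figure~\ref{problem} and the example of Section~\ref{Scur} are precisely what a correct proof must rule out. The paper does this with the machinery of Section~\ref{main section}: decompose a finite simply connected subgraph into maximal polygons (leaves) and maximal trees (branches) via Lemma~\ref{key}, control each chain-like piece by attaching the collar of faces met along its boundary walk and applying $\jmath(G^*)>0$ to that face collar (Lemma~\ref{chain}, with Lemma~\ref{>2}), and reassemble the pieces with Lemma~\ref{partition}. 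Your sketch contains no step of comparable strength, so as written the equivalence $\jmath(G)>0\Leftrightarrow\jmath(G^*)>0$ is not established.
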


Of course the main part of the above theorem is (b). We believe that the part (a) is well known to experts,
but we have decided to contain a proof of it for the sake of completeness. Moreover,
it is not long. For (c), one can easily deduce it from (a) and (b).
The reason why we state our results as above, instead of emphasizing (b) alone, is because
this way could help one seeing the whole picture easily.

In Theorem~\ref{T} we did not require any upper bound for vertex degrees or face degrees of $G$, which
makes the theorem useful.
All the statements in Theorem~\ref{T} become  trivial if both vertex and face degrees of $G$ are bounded, since
in this case $G$ is roughly isometric(quasi-isometric) to its dual $G^*$,
hence (b) follows from Theorem (7.34) of \cite{So2}. (For rough isometries, see Section~\ref{SG}.)

In the course of proving Theorem~\ref{T}(b) we obtained the following result as a byproduct, which might be interesting
by itself (compare it with \cite[Reduction Lemma 2]{Woe}).

\begin{theorem}\label{T2}
Suppose $G$ is a proper planar graph such that $|V(S)| \leq C|\partial_v S|$
for every polygon $S \subset G$,
where $C$ is a constant not depending on $S$. If either $G$ is normal or  face degrees of $G$ are bounded,
then $\jmath(G) > 0$.
\end{theorem}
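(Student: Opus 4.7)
The plan is to reduce the general $\jmath$-bound to the polygon hypothesis via a saturation procedure. Since $\partial_v S$ depends only on $V(S)$ and $G$, I may replace $S$ by the subgraph of $G$ induced on $V(S)$; for such $S$, distinct connected components have no edges between them in $G$, and both $|V(S)|$ and $|\partial_v S|$ split additively over components. It therefore suffices to establish $|V(S)| \leq C'|\partial_v S|$ when $S$ is a finite connected induced subgraph.

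For such $S$, I form its saturation $\hat S$: the subgraph of $G$ induced on $V(S)$ together with every vertex of $G$ lying in a bounded component of $\mathbb{R}^2 \setminus S$. Local finiteness of $G$ makes $\hat S$ finite, and the crucial geometric point is that every added vertex is interior in $\hat S$: by planarity, any edge of $G$ emanating from a vertex $v$ inside a bounded component $B$ of $\mathbb{R}^2 \setminus S$ either stays in $B$ or terminates on $V(S)$, so its other endpoint lies in $V(\hat S)$. This gives $\partial_v \hat S \subseteq V(S)$, and then $V(G) \setminus V(\hat S) \subseteq V(G) \setminus V(S)$ yields the key inclusion $\partial_v \hat S \subseteq \partial_v S$. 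Granting that $\hat S$ qualifies as a polygon, chaining
\[
|V(S)| \leq |V(\hat S)| \leq C|\partial_v \hat S| \leq C|\partial_v S|
\]
finishes the argument and gives $\jmath(G) \geq 1/C$.

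The main obstacle is precisely the qualification: $\hat S$ need not have a simple cycle as its outer boundary when $S$ contains cut vertices, bridges, or pendant trees, in which case its outer boundary is a closed walk with repetitions. The remedy I would employ is to decompose $\hat S$ along cut vertices and bridges into its 2-connected blocks, each of which saturates to a genuine polygon, together with pendant trees. To each polygon block the hypothesis applies directly; each tree part $T$ inherits the easy estimate $|V(T)| < 2|\partial_v T|$ from $\sum_v \deg_T(v) = 2(|V(T)|-1)$ together with the standing assumption $\deg_G(v) \geq 3$, since any $v \in V(T) \setminus \partial_v T$ must have $\deg_T(v) = \deg_G(v) \geq 3$. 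Assembling these local inequalities into a single bound for $\hat S$ is where the hypothesis that $G$ be normal or have bounded face degrees enters: it is used to control how many blocks the decomposition produces relative to $|\partial_v \hat S|$ and to bound the multiplicity contributed by shared cut vertices, so that the resulting constant $C'$ depends only on $C$, the face-degree bound (or the normality structure), and the minimum vertex degree. I expect this bookkeeping, rather than any single deep step, to be the delicate part of the proof; the geometric core is the saturation argument above.
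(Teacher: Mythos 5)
Your preliminary reductions are sound and coincide with the paper's own first move: passing to connected subgraphs and ``filling the holes'' (the paper adds to $S$ all finite components of $G\setminus S$; your saturation $\hat S$ with $\partial_v\hat S\subseteq\partial_v S$ is the same device), and your decomposition into polygon blocks plus pendant trees, with the hypothesis applied to the former and the degree-$3$ tree estimate to the latter, is essentially the paper's decomposition into leaves and branches. The genuine gap is the step you explicitly defer: assembling the per-piece inequalities into a single bound $|V(\hat S)|\leq C'|\partial_v\hat S|$. This is not routine bookkeeping --- it is where the whole content of the theorem lies, and it is exactly the point at which normality or bounded face degrees must enter, because without them the assembly is simply false: the paper's Figure~\ref{problem} exhibits a chain of copies of $\Lambda$ glued back to back at cut vertices and enclosed by two faces of huge degree, so each block satisfies $|V(S_i)|\leq 3|\partial_v S_i|$ while $|\partial_v S|/|V(S)|\to 0$. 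What must actually be proved is that the number of pieces, and the double counting at shared cut vertices, is at most a constant times $|\partial_v S|$; the paper formalizes this in Lemma~\ref{partition} (a telescoping inequality needing each successive intersection to be a single vertex and $n\leq\tau|\partial_v S|$) and then supplies the missing count by two different arguments in the two cases.

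Concretely, in the normal case the paper proves $|\partial_v S|\geq n/3$ by combining Lemma~\ref{>2} (normality forces $|\partial_v S_i|\geq 3$ unless the part is a single edge) with a count of the edge-parts whose two ends attach to leaves; nothing in your sketch produces such a bound. In the bounded-face-degree case an additional idea is required that your plan does not contain: vertex degrees may be unbounded, so a single cut vertex can lie on arbitrarily many blocks, and ``bounding the multiplicity contributed by shared cut vertices'' is precisely what fails a priori. The paper handles this with a two-level partition (Lemma~\ref{key}): collapse the leaves to form a quotient tree, cut that tree at its vertices of degree at least $3$ into paths, and thereby split $S$ into chain-like pieces to which the boundary-walk count of Lemma~\ref{chain} and Corollary~\ref{c2} applies; the face-degree bound $M$ enters through $n\leq|\partial_e S|\leq mM\leq 2M|\partial_v S|$, where $m$ is the length of the boundary walk. (A smaller issue: a filled $2$-connected block need not be a polygon in the paper's sense, since $G$ minus the block may have finite components, so even the applicability of the hypothesis to your blocks needs the same hole-filling care.) Until you supply arguments of this kind, your proposal is a correct plan whose decisive step --- the one the hypotheses are for --- is missing.
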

We call a planar graph \emph{normal} if it is proper and the intersection of every two different faces
is exactly one of the following: the empty set, a vertex, or an edge. We chose the terminology `normal' since
we adopted the first two properties of \emph{normal tilings} defined in \cite{GS}. For polygons,
they are basically finite unions of faces in $F(G)$ with simply connected interiors; for the precise definition,
see Section~\ref{S2}.

\begin{figure}[tb]
\centerline{
\hfill \subfigure[the graph $\Lambda$]{\epsfig{figure=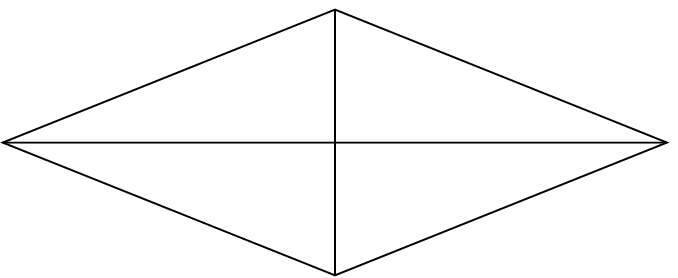, width=1.3 in, height=1.3 in}} \hfill\hfill
\subfigure[attaching eight copies of $\Lambda$ to a vertex of degree 8]{\epsfig{figure=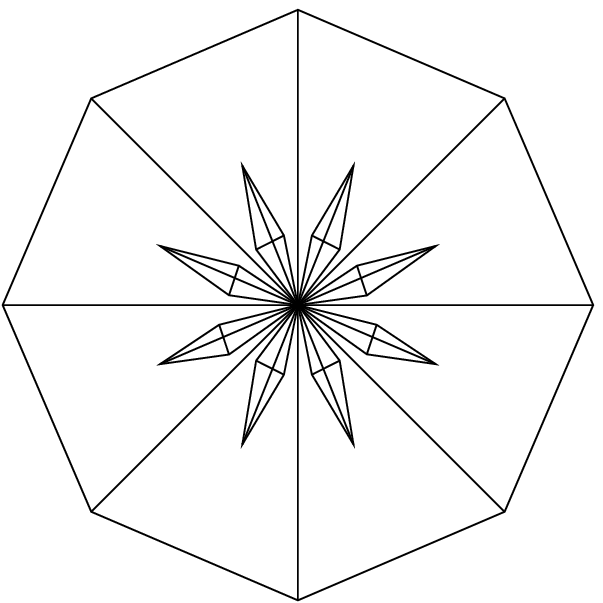, width=1.5 in, height=1.5 in}} \hfill }
\caption{constructing the graph $G$ from $\Gamma$ \label{nonproper}}
\end{figure}
One cannot omit the properness condition in Theorem~\ref{T}, because without it the statement
(b)  is no longer true.
For example, let $\Gamma$ be a triangulation of the plane such that $\deg v \geq 7$ for every $v \in V(\Gamma)$.
Then it is well known that $\jmath(\Gamma)>0$ and $\jmath(\Gamma^*)>0$. (Also see Corollary~\ref{Cor} below for this fact.)
Furthermore, let us assume that there exists a sequence of vertices $v_k \in V(\Gamma)$
such that $n_k := \deg v_k \to \infty$ as $k \to \infty$. The essential property of $\Gamma$ is
that   face degrees of $\Gamma$ are bounded since it is a triangulation of the plane, while
 vertex degrees are not bounded. Now for each $k$,
we attach $n_k$ copies of the graph $\Lambda$ in Figure~\ref{nonproper}(a)
to $v_k$ so that a degree 3 vertex of each copy is identified with $v_k$ and
each face $f \in F(\Gamma)$ with $V(f) \ni v_k$ contains exactly one copy of $\Lambda$ in it
 (Figure~\ref{nonproper}(b)). Let $G$ denote
this new graph, which is definitely not proper but satisfies all the other properties we require; that is,
$3 \leq \deg v, \deg f < \infty$ for every $v \in V(G)$ and $f \in F(G)$, and both $G$ and $G^*$ are simple.

It is not difficult to see $\jmath(G) =0$ since if we denote by $S_k$
the union of $n_k$ copies of $\Lambda$ sharing the vertex $v_k$,
then we have $\partial_v (S_k) = \{ v_k \}$ and $|V(S_k)| = 4 n_k +1$.
To see that $\jmath(G^*)>0$, first note that $G^*$ and $\Gamma^*$ are roughly isometric.
Moreover, face degrees of $G$ and $\Gamma$, or vertex degrees of $G^*$ and $\Gamma^*$, are both bounded
and we chose $\Gamma$ so that $\jmath(\Gamma^*) >0$.
Thus the inequality $\jmath(G^*) > 0$ follows from Theorem (7.34) of \cite{So2}.

To obtain an application of Theorem~\ref{T}, let us introduce so-called \emph{combinatorial curvatures}
defined on planar graphs. Suppose $G$ is a proper planar graph as before.
For each $e \in E(G)$, $v \in V(G)$, and $f \in F(G)$ we define \emph{edge curvature} $\phi$, \emph{vertex curvature}
$\psi$, and \emph{face curvature} $\chi$ by
\begin{align*}
\phi (e) & = \sum_{w \in V(e)} \frac{1}{\deg (w)} + \sum_{g: e \in E(g)} \frac{1}{\deg (g)} -1,\\
\psi (v) & = 1 - \frac{\deg(v)}{2} + \sum_{g: v \in V(g)} \frac{1}{\deg (g)}, \quad \mbox{and}\\
\chi (f) & = 1 - \frac{\deg(f)}{2} + \sum_{w \in V(f)} \frac{1}{\deg(w)}.
\end{align*}
In the above $w$ and $g$ stand for a vertex and a face, respectively.
Remark that the notations $\phi, \psi,$ and $\chi$ are those used in \cite{Woe}, but we have changed the signs.
Other than the above combinatorial curvatures for planar graphs, there is another one
 called \emph{corner curvature} \cite{Kel2, KP}.

Recently combinatorial curvatures have been extensively studied by various researchers
\cite{BP1, BP2, BMS, Cor, DM, Hig, Kel1, Kel2, KP, Sto, Woe, Zuk}, but this concept
was introduced more than seven decades ago.
In \cite[Chap. XII]{Nev}, Nevanlinna introduced a characteristic number called \emph{excess},
which is essentially equal to  the vertex curvature.
Moreover, there might be older literature in this line than \cite{Nev}, since in \cite{Nev}
Nevanlinna mentioned a work of Teichm\"{u}ller \cite{Tei} related to excess.
In fact, excess was defined for
a special type of bipartite regular planar graphs, called Speiser graphs, which capture the combinatorial properties of
meromorphic functions defined on some simply connected Riemann surfaces and ramified only over finitely
many points in the extended complex plane $\mathbb{\overline{C}}$. For more about Speiser graphs and excess, see for example
\cite{Nev, BMeS, Oh}.

For finite subsets $E \subset E(G)$,
let $\overline{\phi}(E) = (1/|E|)\sum_{e\in E} \phi(e)$ and define the \emph{upper average} of $\phi$ on $G$ by
\[
\overline{\phi}(G) = \limsup_{|E(S)| \to \infty} \overline{\phi}(E(S)),
\]
where limit superior is taken over all simply connected finite subgraphs $S$ of $G$. Similarly, for finite subsets
$V \subset V(G)$ and $F \subset F(G)$, let
$\overline{\psi}(V) = (1/|V|)\sum_{v\in V} \psi(v)$ and
$\overline{\chi}(F) = (1/|F|)\sum_{f\in F} \chi(f)$, and define the upper averages of $\psi$ and $\chi$ on $G$,
respectively, by
\[
\overline{\psi}(G) = \limsup_{|V(S)| \to \infty} \overline{\psi}(V(S)), \qquad
\overline{\chi}(G) = \limsup_{|F(S)| \to \infty} \overline{\chi}(F(S)),
\]
where the limit superiors are also taken over all simply connected finite subgraphs $S$ of $G$.
Our second main result is the following.

\begin{theorem}\label{TTT}
Suppose $G$ is a proper planar graph.
\begin{enumerate}[(a)]
\item If $\overline{\phi}(G) < 0$ or $\overline{\chi}(G) < 0$, then $\jmath(G) >0$.
\item If $\overline{\psi}(G) < 0$, and if either $G$ is normal or the vertex degrees of $G$ are bounded,
     then $\jmath(G)>0$.
\item It is possible to have $\overline{\psi}(G) < 0$ and $\jmath(G)=\kappa(G) = 0$.
\end{enumerate}
\end{theorem}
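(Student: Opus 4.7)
\emph{Part (a).} The approach common to all three parts is a combinatorial Gauss--Bonnet identity, obtained for each of $\phi,\psi,\chi$ by expanding the curvature sum and exchanging the order of summation between vertices, edges, and faces, then invoking Euler's relation $|V(S)|-|E(S)|+|F(S)|=1$ valid for any polygon $S$. For $\phi$ this reads
\begin{equation*}
\sum_{e\in E(S)}\phi(e)=1-A(S)+B(S),
\end{equation*}
where $A(S)=\sum_{e\in\partial S}1/\deg_G(w(e))$ (with $w(e)\in V(S)$ the inner endpoint of $e$) and $B(S)\geq 0$ collects the contributions of faces of $G$ not in $F(S)$ that share an edge with $E(S)$; crucially $A(S)\leq|\partial_v S|$. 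The hypothesis $\overline{\phi}(G)<0$ supplies $\epsilon>0$ and $N$ with $\sum_{e\in E(S)}\phi(e)\leq-\epsilon|E(S)|$ for every polygon satisfying $|E(S)|\geq N$; combined with $|E(S)|\geq|V(S)|$ (which follows from $|F(S)|\geq 1$ on a polygon) this yields $|V(S)|\leq C|\partial_v S|$ for all polygons. I would extend from polygons to arbitrary finite subgraphs by a hole-filling reduction: replacing $S$ by the polygon $\hat S$ obtained by absorbing the bounded complementary components can only enlarge $|V|$ and never enlarge $|\partial_v|$, so the inequality propagates. The case $\overline{\chi}(G)<0$ is entirely parallel, via the analogous $\chi$-identity.

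\emph{Part (b).} The identity for $\psi$ on a polygon reads
\begin{equation*}
\sum_{v\in V(S)}\psi(v)=1-\tfrac{1}{2}|\partial S|+C_\psi(S),\qquad 0\leq C_\psi(S)\leq|\partial_v S|,
\end{equation*}
so $\overline{\psi}(G)<0$ immediately yields $|\partial S|\geq c|V(S)|$ for all large polygons. The main obstacle here is the conversion from an $|\partial S|$-inequality to the $|\partial_v S|$-inequality one actually needs for $\jmath(G)>0$: if vertex degrees are uniformly bounded by $D$ then $|\partial S|\leq D|\partial_v S|$ closes the gap at once, while under the normality hypothesis one must argue by a local combinatorial count around each boundary vertex of a polygon, using that normality prevents two exterior faces from sharing more than a single edge with $S$ and hence bounds the number of edges of $\partial S$ that can accumulate at one vertex. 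With the polygon bound $|V(S)|\leq C|\partial_v S|$ in hand, Theorem~\ref{T2} completes the argument.

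\emph{Part (c).} I would construct $G$ by taking the $\{3,7\}$-tiling $\Gamma$ (whose vertex curvature satisfies $\overline{\psi}(\Gamma)<0$) and attaching a sequence of arbitrarily large finite patches $\Lambda_k\subset\Gamma$ to a thin spine by a single gluing each. Cutting at the $k$-th gluing produces finite subgraphs $S_k$ with $|V(S_k)|\to\infty$ but $|\partial_v S_k|$ uniformly bounded, so $\jmath(G)=0$; since $\Gamma$ has bounded face degrees this also yields $\kappa(G)=0$, either directly or by Theorem~\ref{T}. The delicate point, and the main obstacle of (c), is to verify that $\overline{\psi}(G)<0$ survives the construction: the glue vertices (whose degrees may grow large and hence have positive $\psi$) must not drag the averaged vertex curvature on simply connected exhaustions back up to zero. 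This should follow because the $\Lambda_k$ contribute overwhelmingly many interior $\{3,7\}$-vertices, each with the uniformly negative $\psi$ of $\Gamma$, which swamp the bounded contribution of glue vertices in any simply connected subgraph average.
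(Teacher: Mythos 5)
Your part (a) is essentially the paper's route (the paper simply cites Woess, whose proofs are exactly these Euler/Gauss--Bonnet counts), and the only blemish there is cosmetic: filling the holes of a finite subgraph yields a simply connected subgraph, not necessarily a polygon, which is harmless since $\overline{\phi}$ and $\overline{\chi}$ are defined by a limsup over simply connected subgraphs anyway. The problems are in (b) and (c). In (b), your $\psi$-identity controls only the \emph{edge} boundary: it gives $|\partial S|\geq c|V(S)|$, and when vertex degrees are unbounded this says nothing about $|\partial_v S|$. The proposed rescue in the normal case is false: normality (two faces meet in at most a vertex or an edge) does not bound the number of edges of $\partial S$ incident to a single vertex --- a polygon may perfectly well have a boundary vertex of enormous degree with all but two of its edges leaving $S$, normal or not. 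This is exactly why the paper does not argue in $G$ at all: it rewrites $\overline{\psi}(G)<0$ as $\overline{\chi}_1(G^*)<0$, runs Woess's $\chi$-argument on polygons of $G^*$ (where the boundary term that appears \emph{is} controlled by $|\partial_v|$), applies Theorem~\ref{T2} to $G^*$ (which is normal iff $G$ is, and has bounded face degrees iff $G$ has bounded vertex degrees), and returns to $G$ via Theorem~\ref{T}(b). Note also that in your bounded-vertex-degree branch the appeal to Theorem~\ref{T2} applied to $G$ is not justified either, since its hypotheses are ``normal or bounded \emph{face} degrees''; that branch should likewise go through the dual, or directly through Woess's Theorem~2(a) together with the equivalence of $\imath$ and $\jmath$ under bounded vertex degrees.

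Part (c) as proposed cannot work. First, attaching a patch by a single vertex destroys properness (the face surrounding the patch is pinched at the glue vertex, so it is not a closed disk), and properness is a standing hypothesis of the theorem. More fundamentally, suppose only $b_k=O(1)$ edges of $G$ join $\Lambda_k$ to the rest of the graph --- which is precisely what you need to keep $|\partial_v S_k|$ bounded. Then $\sum_{v\in V(\Lambda_k)}\psi(v)=|V(\Lambda_k)|-|E(\Lambda_k)|-\tfrac12 b_k+\sum_g |V(g)\cap V(\Lambda_k)|/\deg g\geq |V(\Lambda_k)|-|E(\Lambda_k)|+|F(\Lambda_k)|-\tfrac12 b_k=1-\tfrac12 b_k$, a bounded quantity: the outer-cycle vertices of the patch (a positive proportion of all its vertices, since the $\{3,7\}$-tiling is hyperbolic) lose their outside neighbors, acquire one huge incident face, and end up with curvature $\geq 0$ instead of $-1/6$, so the total telescopes to a constant. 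Taking $S=\Lambda_k$ in the limsup then forces $\overline{\psi}(G)\geq 0$, killing the example. Your reassurance about ``glue vertices of large degree having positive $\psi$'' is backwards: since every face has degree at least $3$, $\psi(v)\leq 1-\deg(v)/6$, so large degree means strongly \emph{negative} curvature. The paper's construction exploits exactly this tension: its witnessing subgraphs $S_n$ have only four boundary vertices but enormously many outgoing edges, concentrated at hub vertices $o_n$ with $\deg o_n\geq 14|n|+14$, whose negative curvature forces every large connected subgraph to have average curvature at most $-1/6$; necessarily such an example is non-normal and has unbounded vertex degrees, as Theorem~\ref{TTT}(b) says it must.
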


The most surprising part in Theorem~\ref{TTT} might be (c),
because it is very tempting to believe that $\overline{\psi}(G) < 0$ if
and only if $\overline{\chi}(G^*) < 0$. However,  (a) and (c) of Theorem~\ref{TTT},
when combined with Theorem~\ref{T}(b), show that it cannot be true. This discrepancy comes from the fact
that the definition of $\overline{\chi}(G^*)$ requires some subgraphs of $G^*$ whose corresponding
subgraphs of $G$ are \emph{disconnected}. This will be explained in the subsequent sections in detail.
For Theorem~\ref{TTT}(a) and the second part (the case when vertex degrees are bounded) of Theorem~\ref{TTT}(b),
their credits should be addressed to Woess \cite{Woe}. In fact, Woess showed that
$\imath(G) > 0$ if one of the following conditions holds: $\overline{\phi}(G) < 0$, or $\overline{\psi}(G) < 0$,
or $\overline{\chi}(G) < 0$. Note that this result is already enough for the second part of Theorem~\ref{TTT}(b),
because $\imath(G)$ is quantitatively equivalent to $\jmath(G)$ when vertex degrees of $G$ are bounded.
Also one can check that Woess's arguments are enough to show (a)
only with some minor modifications. This will be explained in Section~\ref{Scur}.

It was observed independently in \cite{Hig, Zuk} that the condition $\psi(v)<0$
actually implies $\psi(v)<-\epsilon_0$ for some positive constant $\epsilon_0$. Higuchi also showed in \cite{Hig}
that one can choose $\epsilon_0 = 1/1806$.
Thus we obtain the following immediate corollary of Theorems~\ref{T} and \ref{TTT}.

\begin{corollary}\label{Cor}
Suppose $G$ is a proper planar graph. If $\psi(v)<0$ for all $v \in V(G)$,
or $\chi(f)<0$ for all $f \in F(G)$, then $\jmath(G)>0$.
Consequently, in either case we have $\imath(G)>0$, $\kappa(G)>0$, $\imath(G^*)>0$, $\jmath(G^*)>0$, and $\kappa(G^*)>0$.
\end{corollary}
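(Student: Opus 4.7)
The plan is to feed the Higuchi--\.{Z}uk uniform negativity into Theorem~\ref{TTT} to obtain $\jmath(G)>0$, then propagate that to the remaining five inequalities via Theorem~\ref{T}. As a preliminary, the cited result promotes each pointwise strict inequality $\psi(v)<0$ (resp.\ $\chi(f)<0$) to a uniform bound $\psi(v) \leq -\epsilon_0$ (resp.\ $\chi(f) \leq -\epsilon_0$) with $\epsilon_0 = 1/1806$. Since every finite average of $\psi$ (resp.\ $\chi$) is then at most $-\epsilon_0$, passing to the limsup yields $\overline{\psi}(G) \leq -\epsilon_0 < 0$ (resp.\ $\overline{\chi}(G) \leq -\epsilon_0 < 0$).

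If $\chi(f)<0$ on all of $F(G)$, then $\overline{\chi}(G)<0$ and Theorem~\ref{TTT}(a) gives $\jmath(G)>0$ at once. In the vertex-curvature case $\psi(v)<0$ on $V(G)$, I cannot invoke Theorem~\ref{TTT}(b) directly, because Corollary~\ref{Cor} does not assume $G$ is normal or that vertex degrees are bounded; I therefore pass to the dual. A short computation from the definitions of $\chi$ and $\psi$ shows $\chi_{G^*}(f_v) = \psi_G(v)$ for every $v \in V(G)$, where $f_v \in F(G^*)$ is the face of $G^*$ corresponding to $v$: indeed $\deg_{G^*}(f_v)=\deg_G(v)$, and the vertices on the boundary of $f_v$ are exactly the $G^*$-vertices dual to the faces of $G$ incident to $v$, each of $G^*$-degree equal to its $G$-face-degree. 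Since $G^*$ inherits from $G$ all the standing hypotheses (connected, simple, locally finite, proper, $3 \leq \deg < \infty$, with simple dual $G^{**} \cong G$), the face-curvature case above applied to $G^*$ yields $\jmath(G^*) > 0$, and Theorem~\ref{T}(b) transports this back to $\jmath(G) > 0$.

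With $\jmath(G)>0$ established, Theorem~\ref{T}(b) immediately gives $\jmath(G^*)>0$ and Theorem~\ref{T}(c) yields $\imath(G), \imath(G^*), \kappa(G), \kappa(G^*) > 0$, finishing the proof. The main obstacle I anticipate is not computational but structural: the hypothesis $\psi(v)<0$ does not by itself force $G$ to be normal or vertex degrees to be bounded, and Theorem~\ref{TTT}(c) warns that these side conditions cannot simply be discarded. The identity $\chi_{G^*}(f_v) = \psi_G(v)$ is the single observation that sidesteps this obstruction by shifting the problem to a face-curvature statement on $G^*$, where Theorem~\ref{TTT}(a) applies with no extra assumption.
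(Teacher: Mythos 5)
Your proposal is correct and follows the paper's intended route: the corollary is stated there as an ``immediate'' consequence of the Higuchi--\.{Z}uk uniformization $\psi\leq-\epsilon_0$ (resp.\ $\chi\leq-\epsilon_0$) together with Theorems~\ref{TTT} and~\ref{T}, which is exactly the argument you give. Your explicit dualization $\psi_G(v)=\chi_{G^*}(f_v)$, used so that Theorem~\ref{TTT}(a) can be applied to $G^*$ and the side conditions of Theorem~\ref{TTT}(b) avoided, is precisely the detail the paper leaves implicit (cf.\ its identity $\overline{\psi}(G)=\overline{\chi}_1(G^*)$), and you correctly identify that the pointwise uniform hypothesis is what upgrades $\overline{\chi}_1(G^*)<0$ to $\overline{\chi}(G^*)<0$.
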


Compare this corollary with \cite[Theorem~B and Corollary~2.3]{Hig} and \cite[Proposition~4]{Zuk}. Another corollary of
Theorem~\ref{TTT} is the following.

\begin{corollary}\label{sptree}
Suppose $G$ is a graph satisfying the assumptions in (a) or (b) of Theorem~\ref{TTT}. Then $G$ contains a tree $T$
such that $\jmath(T) > 0$.
\end{corollary}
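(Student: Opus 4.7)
My plan is to deduce the corollary from the following auxiliary claim: any locally finite graph $H$ with $\jmath(H) > 0$ contains a subtree $T$ with $\jmath(T) > 0$. Once this is established, applying Theorem~\ref{TTT} to the stated hypotheses gives $\jmath(G) > 0$, and the corollary follows at once.

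For the claim, I would construct $T \subset G$ as a \emph{$2$-branching} subtree: a rooted infinite tree in which every vertex has exactly two children. The verification that such a $T$ satisfies $\jmath(T) \geq 1/2$ is a short edge-count. For any finite subtree $S \subset T$, let $n_k$ denote the number of $v \in V(S)$ with exactly $k$ children in $S$; the identities $n_0 + n_1 + n_2 = |V(S)|$ and $n_1 + 2n_2 = |V(S)| - 1$ (the latter being the edge count of the tree $S$) force $n_0 = n_2 + 1$, hence $n_2 \leq (|V(S)|-1)/2$. Every $v \in V(S)$ possessing a child in $T \setminus V(S)$, namely the $n_0 + n_1$ vertices with fewer than two children in $S$, lies in $\partial_v S$, so
\[
|\partial_v S| \;\geq\; n_0 + n_1 \;=\; |V(S)| - n_2 \;\geq\; |V(S)|/2.
\]

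The construction of $T$ proceeds inductively. Starting from any root $v_0$, at each step I would extend the current finite tree $T_n$ by attaching, to each leaf $v$ of $T_n$, two of $v$'s $G$-neighbors drawn from $V(G) \setminus V(T_n)$. The bound $|\partial_v T_n| \geq \alpha |V(T_n)|$ with $\alpha = \jmath(G)$ supplies abundantly many external neighbors adjacent to $T_n$, which feeds the branching.

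The main obstacle is the feasibility of this greedy extension: a single external vertex may be adjacent to several active leaves at once, so the naive allocation could leave some leaf with fewer than two fresh neighbors. I expect this to be handled by a Hall-type matching between $L_n$ and the outer vertex boundary $\{w \in V(G) \setminus V(T_n) : w \sim V(T_n)\}$, whose total size is controlled via $\jmath(G) > 0$ (combined with $\imath(G) > 0$ from Theorem~\ref{T}(c)) to meet a marriage condition of the form $|N(L')| \geq 2|L'|$. If strict $2$-branching cannot be maintained at every single step, a fallback is to permit bounded-length path segments between branching points, which still preserves $\jmath(T) > 0$ by a mild extension of the above edge-count. The bounded vertex-degree subcase of Theorem~\ref{TTT}(b) bypasses the feasibility issue entirely, since conflicts per external vertex are then uniformly bounded.
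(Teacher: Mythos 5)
Your reduction of the corollary to the claim that $\jmath(H)>0$ forces a subtree with positive Cheeger constant is the right first move (and your edge count showing that a $2$-branching tree $T$ satisfies $\jmath(T)\geq 1/2$ is correct), but the claim itself is exactly the hard part and is never actually proved. That claim is Theorem~1.1 of \cite{BS}, which is what the paper invokes (after translating between the inner-boundary constant $\jmath$ and the outer-boundary constant $\tilde{\jmath}$ via $\jmath=\tilde{\jmath}/(1+\tilde{\jmath})$); it is a genuine theorem, not a consequence of the greedy scheme you sketch. Concretely, the Hall condition $|N(L')|\geq 2|L'|$ does not follow from $\jmath(G)>0$. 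First, when $\jmath(G)$ is small the outer boundary of $T_n$ has size only about $\jmath(G)\,|V(T_n)|$, which can be far smaller than $2|L_n|$, so even the full leaf set violates the marriage condition and strict $2$-branching at every step is simply impossible. Second, the isoperimetric inequality is a global statement about finite subgraphs and gives no local control: an individual leaf (or any small set of leaves) may have all of its $G$-neighbors already inside $T_n$, so the Hall condition can fail already for singletons, and no matching argument repairs this.

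Your fallback of allowing bounded-length path segments between branching points is precisely where the difficulty lives: one must show that the connecting paths can be kept of uniformly bounded length (roughly of order $1/\jmath(G)$), pairwise disjoint, and that every branch keeps branching forever --- and establishing this is the actual content of the Benjamini--Schramm argument. The remark that the bounded vertex-degree subcase ``bypasses the feasibility issue'' is also not correct: bounded degree only limits how many leaves compete for a single outside vertex; it does not prevent a leaf from having no fresh neighbors at all. So as written the proposal has a genuine gap at its central step. It would become a complete (and essentially the paper's) proof if, after deducing $\jmath(G)>0$ from Theorem~\ref{TTT}, you simply cited \cite{BS} together with the comparison between $\jmath$ and $\tilde{\jmath}$.
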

\begin{proof}
This corollary is an immediate consequence of Theorem~\ref{TTT} above, and Theorem~1.1 of \cite{BS}.
For a given locally finite graph $\Gamma$ and its subgraph $S \subset \Gamma$, let $\tilde{\partial}_v S$ be
the set of all vertices in $V(\Gamma) \setminus V(S)$ that have a neighbor in $S$. Define
\[
\tilde{\jmath} (\Gamma) := \inf_S \frac{|\tilde{\partial}_v S|}{|V(S)|},
\]
where $S$ runs over all the nonempty finite subgraphs of $\Gamma$ as before. Then
Benjamini and Schramm showed in \cite{BS} that every graph $G$ with $\tilde{\jmath}(G)>0$ contains a tree $T$
with $\tilde{\jmath}(T)>0$. But one can check that
$\jmath(\Gamma) = \tilde{\jmath}(\Gamma) /(1+ \tilde{\jmath}(\Gamma))$ for every planar graph $\Gamma$,
so we have the corollary. The details are left to the readers.
\end{proof}

Our last topic is about the relation between strong isoperimetric inequalities and Gromov hyperbolicity on planar graphs. For the definition of Gromov hyperbolic spaces, see Section~\ref{SG}.

\begin{theorem}\label{TG}
Suppose $G$ is a planar graph whose face degrees are bounded.
\begin{enumerate}[(a)]
\item If $\kappa(G)>0$, then $G$ is hyperbolic in the sense of Gromov.
\item The converse of (a) is not true. That is,  $G$ could be Gromov hyperbolic (and normal),
      but $\kappa(G) = 0$.
\item The right isoperimetric constant for Gromov hyperbolicity is $\kappa(G)$.
      That is, it is possible that $G$ is (normal and) not Gromov hyperbolic, but $\imath(G)>0$.
\end{enumerate}
\end{theorem}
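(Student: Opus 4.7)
For part (a), the plan is to deduce Gromov hyperbolicity from a linear isoperimetric (Dehn) inequality. Form the planar $2$-complex $X$ by attaching a $2$-cell to each bounded face of $G$; under the standing assumptions (both $G$ and $G^*$ simple, finite face degrees), every face of $G$ is a topological disk bounded by a simple cycle, so $X$ is well defined and simply connected. Bounded face degrees yield uniformly bounded $2$-cell perimeters, so the inclusion $G \hookrightarrow X$ is a quasi-isometry, and it suffices to show $X$ is Gromov hyperbolic. Given any closed edge-path $\gamma$ in $G$ of length $L$, a standard decomposition into simple closed subpaths reduces to the case of a simple $\gamma$. Then $\gamma$ bounds a finite planar region by the Jordan curve theorem; let $S$ be the subgraph whose face set $F(S)$ consists of the enclosed faces together with their bounding edges and vertices. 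Every edge of $\partial_e S$ separates a face in $F(S)$ from a face in $F(G) \setminus F(S)$ and therefore lies on $\gamma$, so $|\partial_e S| \leq L$. The hypothesis $\kappa(G) > 0$ then yields $|F(S)| \leq L/\kappa(G)$, which is a linear bound on the van Kampen area of $\gamma$. Combined with the standard characterization of Gromov hyperbolicity for simply connected $2$-complexes of bounded $2$-cell diameter, this forces $X$, and hence $G$, to be Gromov hyperbolic.

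For part (b), the plan is to exhibit a normal planar graph $G$ with bounded face degrees which is Gromov hyperbolic yet satisfies $\kappa(G)=0$; since $\kappa$ and $\jmath$ are quantitatively equivalent when face degrees are bounded, this is the same as requiring $\jmath(G)=0$. The natural candidate is a graph whose large-scale geometry is tree-like, yet which contains a sequence of bulky finite subgraphs $S_n$ with $|\partial_e S_n|/|F(S_n)| \to 0$. The subtle point is that the $S_n$ must be attached through thin bottlenecks so that they do not isometrically embed in $G$ as large Euclidean disks --- an isometric Euclidean disk of growing diameter would support arbitrarily fat geodesic triangles and destroy Gromov hyperbolicity. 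For part (c), the plan is to construct a normal planar graph with bounded face degrees that is not Gromov hyperbolic yet has $\imath(G)>0$. By part (a) combined with $\kappa \sim \jmath$, non-hyperbolicity forces $\jmath(G)=0$, and the required gap $\jmath(G)=0<\imath(G)$ is only possible when vertex degrees are unbounded. The construction then places vertices of rapidly growing degree along a non-hyperbolic planar skeleton and tunes the degree sequence so that $|\partial S|/\mbox{Vol}(S)$ stays bounded below even as $|\partial_v S|/|V(S)|$ tends to $0$.

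Part (a) itself is essentially a clean application of the Jordan curve theorem and the standard characterization of Gromov hyperbolicity, so the main obstacle is constructing the counterexamples in (b) and (c). In (b), the two requirements pull in opposite directions: a sequence of subgraphs with vanishing face-boundary ratio naturally looks Euclidean, and large Euclidean regions generically embed fat triangles that obstruct Gromov hyperbolicity, so one must carefully design the bottleneck attachments between the bulk and the hyperbolic skeleton. Part (c) is analogous but uses the opposite mechanism: one must arrange an unbounded vertex-degree sequence so that the edge boundary stays proportional to the volume even as the vertex boundary becomes negligible, all while preserving planarity and the normality condition.
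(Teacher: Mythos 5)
Your argument for part (a) is correct in substance but takes a genuinely different route from the paper. You convert $\kappa(G)>0$ into a linear combinatorial Dehn function for the $2$-complex $X$ obtained by filling the bounded faces (the Jordan-curve step showing $\partial_e S\subset\gamma$, hence $|F(S)|\le L/\kappa(G)$, is exactly the right translation), and then invoke the standard ``linear isoperimetric inequality $\Rightarrow$ Gromov hyperbolic'' principle for simply connected complexes with $2$-cells of bounded perimeter. The paper instead gives a self-contained proof: it reduces to a triangulation, labels the vertices inside a geodesic-plus-detour triangle by distances to three arcs, applies Sperner's lemma to find a ball of radius $t/8$ trapped inside the filled region, and uses the exponential volume growth coming from $\jmath(G)>0$ together with Bonk's detour-growth criterion \cite{Bonk}. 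Your route is shorter but outsources the analytic core to a theorem of the type proved in \cite[Chap.~6, Th.~2.1]{CDP} (the paper remarks its own argument is an alternative proof of that result), and you should not claim that the standing assumptions force every face to be a topological disk bounded by a simple cycle: the graph built from $\Lambda$ in Figure~\ref{nonproper} satisfies all standing assumptions and is not proper. This is harmless for your argument (attach each $2$-cell along the face boundary walk; the edge-boundary estimate survives since an edge interior to the Jordan domain cannot bound a face outside it), but as written it is a false step that needs this repair.

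The genuine gap is in parts (b) and (c): Theorem~\ref{TG}(b),(c) are existence statements, and your proposal does not construct the required graphs. You correctly identify what an example must achieve (for (b): bulky subgraphs $S_n$ with $|\partial_e S_n|/|F(S_n)|\to 0$ attached through bottlenecks to a hyperbolic skeleton; for (c): unbounded vertex degrees so that $\imath>0$ can coexist with $\jmath=0$ on a non-hyperbolic skeleton), but identifying the constraints is not a proof; no graph is exhibited and no property is verified. The paper does this concretely. For (b) it starts from a triangulation $\Gamma_1$ with all degrees $\ge 7$, picks edges $e_n$ far apart, replaces each $e_n$ by $n$ parallel edges crossed by a path from $c_n$ to $d_n$ (so the graph stays simple and normal with bounded face degrees), and checks that the result is roughly isometric to $\Gamma_1$ (hence Gromov hyperbolic) while $|\partial_e S_n|=4$ and $|F(S_n)|=2n+2$, so $\kappa=\jmath=0$. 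For (c) it modifies the square lattice $\Gamma_2$ by replacing the edges of the $n$-th distance shell by $\ell_n$ parallel edges (again crossed by lines to keep the graph simple and normal), with $\ell_{n+1}\ge C_n$ chosen inductively; the nontrivial verification is the partition $V(S)=W_1\cup W_2\cup W_3$ giving $|V(S)|\le 4|\partial S|$, which yields $\imath(G_2)>0$ via the duality in Lemma~\ref{main lemma}, while $G_2$ is roughly isometric to $\Gamma_2$ and hence not hyperbolic. Without explicit constructions of this kind, together with the normality, bounded-face-degree, rough-isometry, and isoperimetric verifications, your (b) and (c) remain unproved.
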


Theorem~\ref{TG}(a) is widely believed and even considered trivial to some experts,
but surprisingly  we could not find its
proof in the literature. Of course, however, it deserves to be written somewhere since it can save some
works like \cite[Corollary 5]{BP2} or \cite[Corollary 1]{Zuk}.
Also note that the condition $\kappa(G)>0$ is equivalent to $\jmath(G)>0$ in
the above theorem, since face degrees of $G$ are bounded.

\section{Planar graphs}\label{S2}
Let $G = \bigl( V(G), E(G) \bigr)$ be a graph,
where $V(G)$ is the vertex set and $E(G)$ is the (undirected) edge set of $G$. Every edge $e \in E(G)$
is associated with two vertices $v, w \in V(G)$,
saying that $e$ is incident to $v$ and $w$, or $e$ connects $v$ and $w$.
In this case we write $e= [v,w]$, and the vertices $v$ and $w$ are called the endpoints of $e$. Also we say that $v$ and $w$ are
neighbors of each other. A graph $G$ is called simple if there is no self-loop nor multiple edges; that is,
for every edge $[v,w]\in E(G)$ we have $v\ne w$, and for every two vertices $v, w \in V(G)$ there is
at most one edge connecting these two vertices.
A graph $G$ is called connected if it is connected as a one-dimensional simplicial complex, and
planar if there is a continuous injective map $h: G \to \mathbb{R}^2$.
The image $h(G)$ is called an embedded graph, but we will not distinguish $G$ from $h(G)$ when the embedding is fixed,
and the embedded graph will be denoted by $G$ instead of $h(G)$. We say that $G$ is embedded into
$\mathbb{R}^2$ locally finitely if every compact set in $\mathbb{R}^2$ intersects
only finite number of vertices and edges of $G$. From now on, $G$ will always be a connected simple planar graph
embedded into $\mathbb{R}^2$ locally finitely.

The closure of each component of $\mathbb{R}^2 \setminus G$
is called a face of $G$, and we denote by $F(G)$  the face set of $G$. The dual graph $G^*$ of $G$ is the
planar graph such that the vertex set of $G^*$ is just $V(G^*)=F(G)$, and for the edge set we have
$[f_1, f_2] \in E(G^*)$ if and only if $f_1$ and
$f_2$ share an edge in $G$. The degree of a vertex $v\in V(G)$, denoted by $\deg v$, is the number of neighbors of $v$,
and the degree of a face $f \in F(G)$, denoted by $\deg f$, is the number of edges in $E(G)$ surrounding $f$.
In this paper, one essential assumption about $G$ is that $G^*$ is simple, and that $3 \leq \deg v, \deg f < \infty$
for every $v \in V(G)$ and $f \in F(G)$. Under this assumption, $\deg f$
is just the degree of $f$ as a vertex in $G^*$.

A graph $S$ is called a subgraph of $G$, denoted by $S \subset G$, if $V(S) \subset V(G)$ and $E(S) \subset E(G)$.
A subgraph will be always assumed to be an induced subgraph; that is, if $v, w \in V(S)$ and $[v,w] \in E(G)$,
then $[v,w] \in E(S)$. A subgraph is called finite if $|V(S)|$ is finite, and a finite
subgraph $S \subset G$ is called simply connected if it is connected as a one-dimensional simplicial complex and
$G\setminus S$ has only one component. Here we remark that our definition of simply connectedness might
be different from the one in some other literature, since
a subgraph $S \subset G$ could be simply connected as a subset of $\mathbb{R}^2$,
but not as a subgraph of $G$ (Figure~\ref{nosimply}(a)).
%In particular, our definition is slightly stronger than the one
%given in \cite{Woe},
However, this usually makes no difference when studying strong isoperimetric inequalities, because in the proof
we can always add to $S$ all the finite components of $G \setminus S$ (cf. proofs of Lemma~\ref{main lemma},
Theorem~\ref{T2}, etc.).
\begin{figure}[h]
\centerline{
\hfill \subfigure[a subgraph (solid polygonal line) that is not simply connected ]{\epsfig{figure=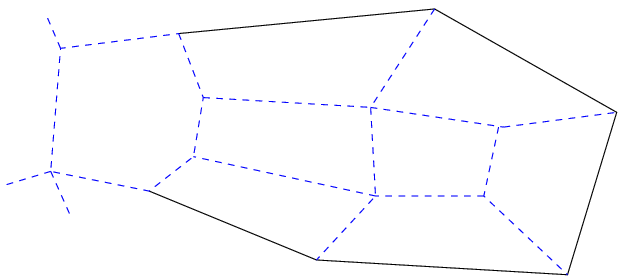, width=1.9 in, height=1.2 in}} \hfill\hfill
\subfigure[a face graph with simply connected interior but not being a polygon]{\epsfig{figure=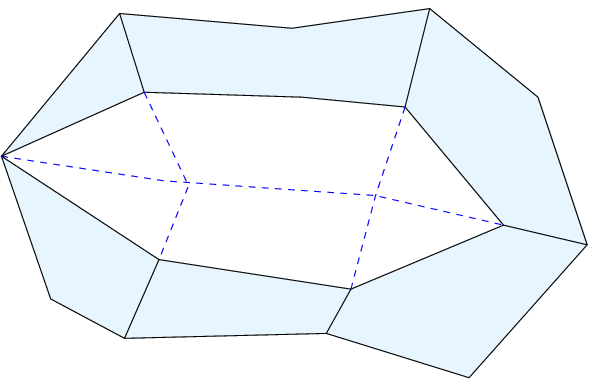, width=1.9 in, height=1.2 in}} \hfill }
\caption{a face graph $S$ (right figure) and its dual part $S^*$ (left figure)}\label{nosimply}
\end{figure}

For $S \subset G$, we define $F(S)$
as the \emph{subset} of $F(G)$  such that $f \in F(S)$ if and only if $f \in F(G)$ and it
is the closure of a component of $\mathbb{R}^2 \setminus S$. This notation is a little bit confusing, since
$F(S)$ in fact means the intersection of $F(G)$ and the face set of $S$. By abuse of the notation,
we will treat a face $f \in F(G)$ as a subgraph of $G$ consisting of all the edges and vertices on the
topological boundary of $f$. Thus we have $|E(f)|= \deg f$. Edges are also treated as subgraphs in a similar way.
A subgraph $S \subset G$ is called a \emph{face graph}
if it is a union of faces; i.e., $V(S) = \bigcup_{f \in F(S)} V(f)$. A finite subgraph is called
a \emph{polygon} if it is a simply connected face graph and the \emph{interior} of $S$,
\begin{equation}\label{E1}
D(S) := \left( \bigcup_{f \in F(S)} f \right)^\circ,
\end{equation}
is simply connected as a subset of $\mathbb{R}^2$, where
$(\cdot)^\circ$ denotes the interior of the considered set with respect to the Euclidean topology.
Note that even though a face graph has  simply connected interior, the graph itself may not be simply connected  (Figure~\ref{nosimply}(b)).  Thus in the definition we require a polygon to be simply connected.

Now suppose $S \subset G$ is a face subgraph, and define $S^*$ as the subgraph of $G^*$
such that $V(S^*)=F(S)$ (Figure~\ref{nosimply}).
Then it is easy to check that for a given face graph $S\subset G$, the interior $D(S)$ is connected if and only if
$S^*$ is connected, and such $S$ is a polygon if and only if
$S^*$ is simply connected. In particular this suggests that even though $S$ is
connected, $S^*$ may not be connected. For example, suppose $f_1, f_2$ are faces of $G$ such that
$f_1 \cap f_2$ is a vertex. Then the face graph $S\subset G$ with $F(S) =  \{ f_1, f_2 \}$ is simply connected,
but $S^*$ is definitely disconnected. Thus if we define
\begin{equation}\label{chibar}
\overline{\chi}_1 (G) = \limsup_{|F(S)|\to \infty} \overline{\chi}(F(S)),
\end{equation}
where at this time the limit superior is taken over all the \emph{polygons} $S \subset G$, then we have
$\overline{\psi}(G^*) =\overline{\chi}_1 (G) \leq \overline{\chi}(G)$. Therefore the condition $\overline{\chi}(G) < 0$
is stronger than the condition $\overline{\psi}(G^*)<0$, and this observation has delivered us to Theorem~\ref{TTT}.

For a face subgraph of $S \subset G$, there is a bijection
between $\partial S^*$ and $\partial_e S$. However, there is no set defined in $S$ which corresponds to $\partial_v S^*$.
Thus if $S$ is a subgraph of $G$, which may or may not be a face graph,
let us define the \emph{face boundary} of $S$ by
\[
\partial_f S := \{ f \in F(S) : E(f) \cap \partial_e S \ne \emptyset \}.
\]
That is, $\partial_f S$ is the set of faces in $F(S)$, each of
which shares an edge with a face in $F(G) \setminus F(S)$.

We finish this section with the following lemma. Recall that a planar graph $G$ is called proper
if every face of $G$ is a topological closed disk, and normal if it is proper and any two different faces of $G$
share at most a vertex or an edge.

\begin{lemma}\label{>2}
Suppose $G$ is a proper planar graph.
Then for every finite and connected subgraph $S \subset G$ with $|V(S)| \geq 2$, we have $|\partial_v S| \geq 2$.
Moreover, if $G$ is normal and $S \subset G$ has the property $|V(S)| \geq 3$, then $|\partial_v S| \geq 3$.
\end{lemma}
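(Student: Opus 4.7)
The plan is to reduce both inequalities to vertex-connectivity statements about $G$. Observe that if $|\partial_v S|=k$, then deleting $\partial_v S$ separates $V(S)\setminus\partial_v S$ from $V(G)\setminus V(S)$ in $G$; under the size hypotheses both of these sets are nonempty (the first because $|V(S)|>k$, and the second because $G$ is infinite while $S$ is finite). Hence it suffices to prove that a proper planar graph is 2-vertex-connected, and that a normal planar graph is 3-vertex-connected.

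For 2-connectivity, suppose $v_0$ is a cut vertex of $G$. Pick two edges $e,e'$ meeting $v_0$ that are consecutive in the cyclic order of the planar embedding, and let $f$ be the face whose corner at $v_0$ they bound. Since $G$ is proper, $\partial f$ is a simple cycle visiting $v_0$ exactly once, so the arc of $\partial f$ running from $e$ around to $e'$ is a path in $G\setminus\{v_0\}$ joining the two neighbors of $v_0$ lying on $e$ and $e'$. Iterating over consecutive pairs around $v_0$ places every pair of neighbors of $v_0$ in the same component of $G\setminus\{v_0\}$, contradicting the cut-vertex assumption.

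For 3-connectivity of a normal $G$, assume $\{u,v\}$ is a 2-cut. The 2-connectivity already proved, applied to $v$, implies that the edges at $u$ other than $[u,v]$ cannot all terminate in a single component of $G\setminus\{u,v\}$; hence there is a corner at $u$ whose two bounding edges are both distinct from $[u,v]$ and go to different components, and the simple-cycle argument above produces a face $f$ with $u,v\in\partial f$ but $[u,v]\notin\partial f$. When $[u,v]\notin E(G)$, iterating the argument yields a second such face $f'$, and $f\cap f'\supseteq\{u,v\}$ while containing no edge in common, violating normality. When $[u,v]\in E(G)$, pairing $f$ with either of the two faces bordering the edge $[u,v]$ produces the same kind of forbidden intersection, since those faces contain $[u,v]$ while $f$ does not. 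I expect the main obstacle to be this last subcase: the two faces along $[u,v]$ are legitimately allowed to share that edge, so the contradiction must come from a third face that contains both $u$ and $v$ but misses $[u,v]$, and extracting such a face is precisely where the 2-connectivity established in the previous step becomes essential.
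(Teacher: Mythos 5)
Your proposal is correct, but it follows a genuinely different route from the paper's. The paper argues directly about the finite subgraph $S$: if $\partial_v S$ were a single vertex, the unique face surrounding $S$ could not be a topological closed disk, and in the normal case the two faces $f_1, f_2$ surrounding $S$ both contain the two boundary vertices $v_1, v_2$, so normality forces $f_1 \cap f_2 = [v_1,v_2]$ and hence $S$ to be a single edge, contradicting $|V(S)|\geq 3$. You instead prove the stronger global statements that properness implies $2$-connectivity and normality implies $3$-connectivity of $G$, and then note that $\partial_v S$ is a vertex cut separating $V(S)\setminus \partial_v S$ from $V(G)\setminus V(S)$. Your route buys more (it also excludes $2$-cuts with both sides infinite, which the lemma never claims), at the price of the corner and cyclic-order analysis at $u$ and $v$; the paper's local argument is shorter. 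Both ultimately rest on the same two facts: a disk face has a simple boundary cycle (properness), and two faces meeting in two vertices must meet in the edge joining them (normality).

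Two points in your write-up need one more line each, though neither is fatal. First, the nonemptiness of $V(G)\setminus V(S)$ uses that $G$ is infinite; this does hold here, since a finite plane graph has an unbounded face, which cannot be a closed disk, but it should be said. Second, in the subcase $[u,v]\notin E(G)$ you assert that ``iterating the argument yields a second such face $f'$'' without explaining why $f'\neq f$; if you iterate at $v$, the face you produce could be $f$ itself, since a single face may have qualifying corners at both $u$ and $v$. The clean fix is to stay at $u$: when no edge joins $u$ to $v$, every edge at $u$ ends in some component and at least two components occur among these ends, so the cyclic order around $u$ has at least two transitions between distinct components, hence two distinct corners of the required type; and distinct corners at $u$ give distinct faces, because a face that is a closed disk has a simple boundary cycle and therefore exactly one corner at $u$. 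Then $f\cap f'\supseteq\{u,v\}$ while no edge joins $u$ and $v$, contradicting normality and completing your argument.
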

\begin{proof}
Suppose $S \subset G$ is a finite and connected subgraph of $G$. Then
we cannot have $\partial_v S = \emptyset$ since $G$ is connected.
If $|\partial_v S| = 1$,  there must be a unique face $f \in F(G)$ surrounding $S$; i.e.,
there is only one face $f \in F(G) \setminus F(S)$ such that $E(f) \cap E(S) \ne \emptyset$.
Then $f$ cannot be a topological closed disk because $S$ contains an edge, contradicting the
properness condition of $G$.

Now suppose $G$ is normal, $S \subset G$, $\partial_v S =  \{ v_1, v_2 \}$, and $|V(S)| \geq 3$.
In this case there are exactly two faces, say $f_1$ and $f_2$, whose union surrounds $S$.
Then we must have $v_1, v_2 \in V(f_1) \cap V(f_2)$, so the normality condition of $G$ implies
that $f_1 \cap f_2 = [v_1, v_2]$. This means that $S$ is just an edge because both $f_1$ and $f_2$ are
topological closed disks, which is definitely a contradiction since
 $|V(S)|\geq 3$.
\end{proof}

\section{Proof of Theorem~\ref{T}(a)}\label{a}
First,  let us derive a useful formula which will be used frequently.
Let $S$ be a finite simply connected subgraph of $G$.
Then every vertex $v \in V(S) \setminus \partial_v S$ corresponds to at least three edges in $E(S)$
and each vertex $v \in \partial_v S$ corresponds to at least one edge in $E(S)$,
while every edge in $E(S)$ corresponds to exactly two vertices in $V(S)$. Therefore we have
\[
2 |E(S)| \geq 3(|V(S)| - |\partial_v S|) +  |\partial_v S| = 3|V(S)| - 2|\partial_v S|.
\]
Thus by the Euler's formula we obtain
\[
|E(S)| +1 = |V(S)| + |F(S)| \leq \frac{2}{3} \cdot |E(S)| + \frac{2}{3} \cdot |\partial_v S| + |F(S)|,
\]
or
\begin{equation}\label{facebound}
|E(S)| \leq 2 |\partial_v S| + 3|F(S)| - 3.
\end{equation}

\bigskip

\begin{lemma}\label{main lemma}
For a given planar graph $G$, the following two conditions are equivalent:
\begin{enumerate}[(a)]
\item for every finite $S \subset G$, there is a constant $C_1$
      such that $|F(S)| \leq C_1 |\partial_e S|$;
\item for every finite $S \subset G$, there is a constant $C_2$
      such that $\sum_{f \in F(S)} \deg (f)  \leq C_2 |\partial_e S|$.
\end{enumerate}
Furthermore, the constants $C_i$ are quantitative to each other.
\end{lemma}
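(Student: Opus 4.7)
The plan is to prove the two implications separately. For (b) $\Rightarrow$ (a), the universal bound $\deg(f) \geq 3$ gives $3|F(S)| \leq \sum_{f \in F(S)} \deg(f) \leq C_2 |\partial_e S|$, so (a) holds with $C_1 = C_2/3$. For the interesting direction (a) $\Rightarrow$ (b), the key idea is to decompose $\sum \deg(f)$ into interior and boundary edge contributions, bound the boundary part by $|\partial_e S|$ directly, and bound the interior part using the planarity of the dual graph $G^*$.

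Concretely, I would start by double counting edge-face incidences. For each $e \in E(G)$ let $n(e) \in \{0,1,2\}$ denote the number of faces in $F(S)$ whose boundary contains $e$. Then
\[
\sum_{f \in F(S)} \deg(f) = \sum_{e \in E(G)} n(e) = 2I + B,
\]
where $I := |\{e \in E(G) : n(e) = 2\}|$ and $B := |\{e \in E(G) : n(e) = 1\}|$. Any edge with $n(e) \geq 1$ lies in $E(S)$, because the boundary of every face in $F(S)$ is contained in $S$; and if $n(e) = 1$, then the other face adjacent to $e$ lies in $F(G) \setminus F(S)$, so $e \in \partial_e S$ by definition. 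Hence $|\partial_e S| \geq B$.

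The crucial observation is that $I$ equals the number of edges in the subgraph of $G^*$ induced by $X := F(S) \subset V(G^*)$: an edge $e$ with $n(e) = 2$ is shared by two faces of $F(S)$, and the corresponding dual edge $e^* \in E(G^*)$ has both endpoints in $X$, and this correspondence is bijective. Because $G^*$ is simple and planar, this induced subgraph is simple and planar, so the standard Euler edge bound gives $I \leq 3|X| = 3|F(S)|$. Combining with hypothesis (a),
\[
\sum_{f \in F(S)} \deg(f) = 2I + B \leq 6|F(S)| + |\partial_e S| \leq (6C_1 + 1)|\partial_e S|,
\]
so (b) holds with $C_2 = 6C_1 + 1$, quantitatively equivalent to $C_1$.

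I do not anticipate a real obstacle here; once one recognizes that interior edges in $G$ correspond bijectively to edges inside the vertex set $X$ of the dual graph, the planarity of $G^*$ does all the work. A minor bookkeeping point is that $S$ may contain \emph{stick} edges (those with $n(e) = 0$) which contribute to $|\partial_e S|$ but not to $\sum \deg(f)$; these are harmlessly absorbed into the inequality $|\partial_e S| \geq B$. Notably the argument requires neither properness of $G$, nor any bound on face degrees, nor any connectedness hypothesis on $S$.
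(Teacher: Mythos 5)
Your proof is correct, but it takes a genuinely different route from the paper. The paper's argument stays on the primal side: it first reduces to the case where $S$ is a polygon (adding the finite components of $G \setminus S$, pruning edges that bound no face of $F(S)$, and treating the components of $D(S)$ separately), and then combines the Euler-formula estimate $|E(S)| \leq 2|\partial_v S| + 3|F(S)| - 3$ with the polygon identity $2|E(S)| = \sum_{f \in F(S)} \deg(f) + |\partial_e S|$ to get $\sum_{f \in F(S)} \deg(f) \leq 3|\partial_e S| + 6|F(S)|$, hence $C_2 = 6C_1 + 3$ (with a remark that $6C_1+1$ is attainable). You instead double count edge--face incidences, bound the singly covered edges by $|\partial_e S|$ (valid because, $G^*$ having no loops, every edge of $G$ borders two distinct faces), and bound the doubly covered edges by the number of edges of the subgraph of $G^*$ induced on $F(S)$, which is at most $3|F(S)|$ since $G^*$ is simple and planar; the injectivity of $e \mapsto e^*$ here uses precisely the standing assumption that $G^*$ is simple, i.e.\ that two faces share at most one edge. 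What your route buys: it avoids the polygon reduction entirely (the most delicate step of the paper's proof), it applies verbatim to arbitrary finite subgraphs with no connectedness or properness hypotheses, it does not even use the assumption $\deg v \geq 3$ (which the paper needs to derive its Euler estimate), and it yields the sharper constant $6C_1+1$ directly. What the paper's route buys is economy within the paper: the estimates \eqref{facebound} and \eqref{edgenumber} are needed elsewhere anyway, so the lemma comes almost for free once they are in place.
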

\begin{proof}
We only need to show the implication (a) $\to$ (b), since the other direction is trivial. In this case,
we can assume without loss of generality that $S$ is a  polygon, since otherwise we can
add to $S$ all the finite components of $G \setminus S$ and
cut from $S$ all the edges not surrounding a face in $F(S)$.  Note that these operations make
either the set $\partial_e S$ smaller, or the set $F(S)$ bigger, or both.
If $D(S)$ consists of several components, we can consider them separately.

Then since $S$ is a polygon we have $|\partial_v S| \leq |\partial_e S|$ and
\begin{equation}\label{edgenumber}
2 |E(S)| = \sum_{f \in F(S)} \deg (f) + |\partial_e S|.
\end{equation}
Now equations  \eqref{facebound} and \eqref{edgenumber} imply that
\[
 \sum_{f \in F(S)} \deg (f) = 2|E(S)| - |\partial_e S| \leq 4|\partial_v S| + 6|F(S)| - |\partial_e S|
 \leq 3 |\partial_e S| + 6|F(S)|,
\]
and we obtain the implication (a) $\to$ (b) with $C_2 = 6C_1 +3$. In fact one can check,
by modifying \eqref{facebound}, that
a better estimate $C_2 \leq 6 C_1 +1$ holds since $S$ is a polygon hence every vertex in $\partial_v S$ corresponds
at least two edges in $E(S)$. This completes the proof.
\end{proof}

Note that the condition (a) of the previous lemma is equivalent to the condition $\kappa(G) >0$
and,  considering the duality property, one can check that (b) is equivalent to $\imath(G^*)>0$.
Thus Lemma~\ref{main lemma} implies the equivalence between $\imath(G^*)>0$ and $\kappa(G)>0$.
The equivalence between $\imath(G)>0$ and $\kappa(G^*)>0$ comes from  the duality.
This completes the proof of Theorem~\ref{T}(a).

\section{Partitioning a finite  subgraph}\label{main section}
In this section we will prove Theorem~\ref{T2} and the rest of Theorem~\ref{T}.
Our main strategy is partitioning a given finite subgraph into `nice' subgraphs, and passing
strong isoperimetric inequalities of `nice' subgraphs to the given one.

In the lemma below, the notation $S_1 \cup S_2$, where $S_1, S_2$ are subgraphs of $G$, means
the subgraph $S \subset G$ with $V(S) = V(S_1) \cup V(S_2)$ and $E(S)=E(S_1)\cup E(S_2)$.
A priori, therefore, $S_1 \cup S_2$ does not have to be an \emph{induced} subgraph,
but we will only consider the case when it is  induced.
The graph $S_1 \cap S_2$ is defined similarly.

\begin{lemma}\label{partition}
Suppose $S$ is a finite subgraph of $G$. If $S = S_1 \cup S_2 \cup \cdots \cup S_n$,
where $S_1, S_2, \ldots, S_n$ are subgraphs of $G$ such that
\begin{enumerate}[(a)]
\item $|V(S_i)| \leq C |\partial_v S_i|$ for each $i =1,2, \ldots, n$,
\item $S_1 \cup S_2 \cup \cdots \cup S_{i}$ is an induced subgraph for each $i =1,2, \ldots, n$,
\item $|V\bigl( (S_1 \cup S_2 \cup \cdots \cup S_{i-1} ) \cap S_i\bigr) | = 1$ for each
      $i =2,3, \ldots, n$, and
\item $|\partial_v S| \geq n / \tau$ for some $\tau > 0$,
\end{enumerate}
then we have $|V(S)|  \leq (1+2\tau)C|\partial_v S|$.
\end{lemma}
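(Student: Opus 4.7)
The plan is to combine the four hypotheses: use (c) to count vertices exactly, (a) to turn the vertex count into a boundary count over the pieces, (b) to bridge between boundary vertices of pieces and of $S$, and (d) to absorb the residual $n$-dependence. Setting $T_i = S_1 \cup \cdots \cup S_i$, an easy induction from (c) yields $|V(T_i)| = |V(T_{i-1})| + |V(S_i)| - 1$ for $i \geq 2$, so
\[
|V(S)| = \sum_{i=1}^n |V(S_i)| - (n-1).
\]
Combining this with (a) gives $|V(S)| + (n-1) \leq C \sum_{i=1}^n |\partial_v S_i|$, and everything reduces to bounding $\sum_i |\partial_v S_i|$ in terms of $|\partial_v S|$ and $n$.

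The key estimate I would prove is
\[
\sum_{i=1}^n |\partial_v S_i| \leq |\partial_v S| + 2(n-1). \tag{$\ast$}
\]
To this end I introduce the multiplicity $d(v) = |\{i : v \in V(S_i)\}|$ and the gluing set $U = \{u_2, u_3, \ldots, u_n\}$. Two observations will drive the count. First, $U = \{v \in V(S) : d(v) \geq 2\}$: the inclusion $\subseteq$ holds because $u_j \in V(S_j) \cap V(T_{j-1})$, while $\supseteq$ follows from (c), which forces $V(S_i) \cap V(S_j) \subseteq \{u_{\max(i,j)}\}$ for $i \neq j$. Second, any $v \in \partial_v S_i \setminus \partial_v S$ lies in $U$: a neighbor $w$ of $v$ outside $V(S_i)$ must actually lie in $V(S)$ (since $v \notin \partial_v S$), and because $S$ is induced by (b) the edge $[v,w]$ belongs to $E(S) = \bigcup_k E(S_k)$, so $[v,w] \in E(S_k)$ for some $k$; the case $k = i$ is excluded as it would force $w \in V(S_i)$, so $v \in V(S_i) \cap V(S_k)$ with $k \neq i$, placing $v$ in $U$. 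Swapping the order of summation then yields
\[
\sum_{i=1}^n |\partial_v S_i| = \sum_{v \in V(S)} \bigl|\{i : v \in \partial_v S_i\}\bigr| \leq |\partial_v S \setminus U| + \sum_{v \in U} d(v),
\]
and the identity $\sum_v d(v) = |V(S)| + (n-1)$, combined with $d(v) = 1$ off $U$, gives $\sum_{v \in U} d(v) = (n-1) + |U| \leq 2(n-1)$. This proves $(\ast)$.

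Plugging $(\ast)$ back into $|V(S)| + (n-1) \leq C \sum_i |\partial_v S_i|$ yields $|V(S)| \leq C|\partial_v S| + (2C-1)(n-1)$, and invoking (d) in the form $n-1 \leq \tau |\partial_v S|$ gives
\[
|V(S)| \leq \bigl(C + (2C-1)\tau\bigr)|\partial_v S| \leq (1+2\tau)\,C\,|\partial_v S|,
\]
as desired. The only nontrivial step, and the main obstacle in my view, is the second observation used in $(\ast)$: it is there that one must exploit \emph{both} the induced-subgraph property (b) and the decomposition $E(S) = \bigcup_i E(S_i)$ to prevent an edge from escaping $V(S_i)$ through a non-gluing vertex. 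Everything else is bookkeeping that lines up cleanly once that step is in hand.
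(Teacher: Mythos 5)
Your proof is correct and follows essentially the same route as the paper: the paper's key step is the telescoping inequality $|\partial_v(S_1\cup\cdots\cup S_i)| \geq |\partial_v(S_1\cup\cdots\cup S_{i-1})| + |\partial_v S_i| - 2$, which upon summation is exactly your estimate $(\ast)$, and the final bookkeeping with (a) and (d) is identical. Your double count via the multiplicities $d(v)$ and the gluing set $U$ is just a global repackaging of the same use of the induced-subgraph hypothesis (an edge leaving some $S_i$ at a non-gluing vertex must leave $S$ itself), so the two arguments coincide in substance.
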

\begin{proof}
From (b) and (c), it is easy to see that
\begin{equation}\label{22}
|\partial_v (S_1 \cup \cdots \cup S_{i})| \geq |\partial_v (S_1 \cup \cdots \cup S_{i-1})| + |\partial_v S_i| -2.
\end{equation}
Therefore,
\begin{align*}
|V(S)| & = |V(S_1 \cup \cdots \cup S_n)| \leq |V(S_1)| + |V(S_2)| + \cdots + |V(S_n)|\\
       & \leq C (|\partial_v S_1| + |\partial_v S_2| + \cdots + |\partial_v S_n|)\\
       & \leq C( |\partial_v (S_1 \cup S_2)| + |\partial_v S_3| + \cdots + |\partial_v S_n| +2)  \\
       & \leq \cdots  \leq C(|\partial_v (S_1 \cup \cdots \cup S_n)| + 2n-2)
        \leq (1+2 \tau) C |\partial_v S|,
\end{align*}
as desired.
\end{proof}

Suppose $S$ is a finite simply connected subgraph of $G$. If $T \subset S$ is a polygon such that
no $T'$ with $T \subsetneq T' \subset S$ is a polygon, we call $T$ a \emph{leaf} of $S$.
If $T \subset S$ is finite tree such that $E(T) \cap E(f) = \emptyset$ for every $f \in F(S)$, and
 no $T'$ with $T \subsetneq T' \subset S$ is a tree satisfying the same property,
we call such $T$ a \emph{branch} of $S$. If $T \subset S$ is either a leaf or a branch,
we call $T$ a \emph{part} of $S$.

\begin{proof}[Proof of Theorem~\ref{T2}: the normal case]
Let $G$ be a \emph{normal} planar graph satisfying the assumptions in Theorem~\ref{T2}, and suppose
a finite subgraph $S \subset G$ is given. To prove Theorem~\ref{T2}, we may assume
that $S$ is simply connected. Otherwise we can add to $S$ all the finite components of $G \setminus S$ and consider
each component of $S$ separately.

Choose an edge $e_1 \in E(S)$, and note that there exists a unique leaf or branch $S_1$ of $S$ such that $e \in E(S_1)$,
depending on whether $e \in E(f)$ for some $f \in F(S)$ or not, respectively.
If $S_1 \ne S$, choose $e_2 \in E(S) \setminus E(S_1)$ with only one end in $V(S_1)$, and let $S_2$ be
the part of $S$ such that $e_2 \in E(S_2)$. Then because $S_1$ and $S_2$ are \emph{maximal} polygons or trees
of the \emph{simply connected} subgraph $S$, one can see that
$S_1 \cap S_2$ is a single vertex that is an end of $e_2$, and $S_1 \cup S_2$ is an induced subgraph of $S$.
If $S_1 \cup S_2 \ne S$,  choose $e_3 \in E(S) \setminus E(S_1 \cup S_2)$ with only one end in
$V(S_1 \cup S_2)$, and repeat the same process.
This process cannot run forever, since $S$ is a finite graph. Thus we just have enumerated the
parts $S_1, S_2, \ldots, S_n$ of $S$ so that
$S = S_1 \cup S_2 \cup \cdots \cup S_n$ and they satisfy the conditions (b) and (c) of Lemma~\ref{partition}.

Each $S_i$ is either a branch or a leaf, i.e., a finite tree or a polygon. If $S_i$ is a leaf,
the inequality $|V(S_i)| \leq C|\partial_v S_i|$ holds by our assumption. If $S_i$ is a branch, then
since $F(S_i) = \emptyset$ and $|V(S_i)| = |E(S_i)| +1$, we have $|V(S_i)| \leq 2 |\partial_v S_i|$ by \eqref{facebound}.
Thus the condition (a) of Lemma~\ref{partition} is also satisfied for the sequence $S_1, \ldots, S_n$.

It remains to show that $|\partial_v S| \geq n/3$. To see this, let $k$ be the number of $S_i$
such that $|\partial_v S_i| = 2$ and $\partial_v S \cap V(S_i) \ne \emptyset$. If $k \geq n/3$,
there is nothing more to prove since such $S_i$ must be an edge by Lemma~\ref{>2}, hence
such $S_i$'s are disjoint by our definition of a branch.
Now suppose that $k < n/3$. Then a part $S_i$ satisfying $|\partial_v S_i| = 2$ and
$\partial_v S \cap V(S_i) = \emptyset$ must be an edge whose both ends are connected to leaves. Thus
the number of such $S_i$'s cannot exceed $(n-k)/2$. This means that there are at least $(n-k)/2$ parts with
$|\partial_v S_i| \geq 3$. Since such $S_i$ makes the left hand side of  \eqref{22} increase at least by one, we conclude that
\[
|\partial_v S| = |\partial_v (S_1 \cup \cdots \cup S_n)| \geq \frac{n-k}{2} \geq \frac{n}{3}
\]
since $k < n/3$.
This completes the proof.
\end{proof}

\begin{figure}[hbt]
\begin{center}
 \input{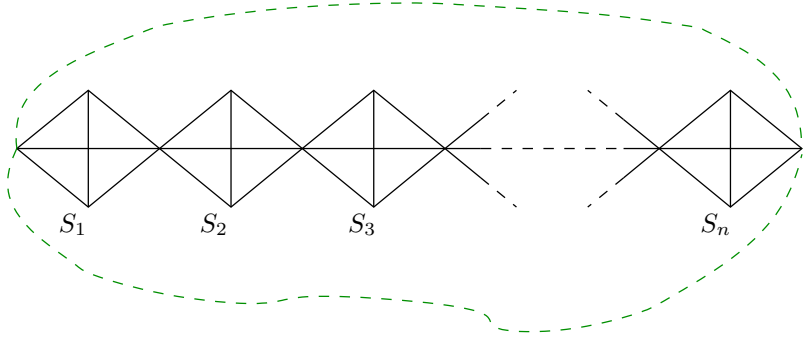}
 \caption{a problem when partitioning into polygons}\label{problem}
\end{center}
\end{figure}
The approach used in the previous proof, that is, partitioning $S$ into parts and using Lemma~\ref{partition},
has some problems when $G$ is not normal.
Let $\Lambda$ be the graph of Figure~\ref{nonproper}(a) given in the introduction,
and for each $i =1,2,\ldots,n$, let $S_i$ be a copy of $\Lambda$. We next connect them
back to back as in  Figure~\ref{problem} and obtain a new graph
$S := S_1 \cup S_2 \cup \cdots \cup S_n$. Also suppose $S$ is enclosed
by the union of two faces with huge face degrees. Then even though
each $S_i$ satisfies the inequality $|V(S_i)| \leq 3 |\partial_v S_i|$, we cannot pass this
inequality to $S$; i.e., we have  $|\partial_v S|/|V(S)| \to 0$ as  $n \to \infty$.
As we saw in the previous proof, however, this problem cannot occur  when $G$ is normal.
The next lemma says that it cannot happen either if $\jmath(G^*)>0$.

\begin{lemma}\label{chain}
Suppose $\jmath(G^*)>0$, and let $S$ be a finite simply connected subgraph of $G$ satisfying the following two properties:
\begin{enumerate}[(a)]
\item every branch of $S$ is a path; i.e., a finite union of consecutive edges without self-intersections;
\item $P \cap P' \cap P'' = \emptyset$ for every three distinct parts $P, P', P''$ of $S$.
\end{enumerate}
Then there is an absolute constant $C$ such that $|V(S)|  \leq C |\partial_v S|$.
\end{lemma}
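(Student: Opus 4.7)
The plan is to mimic the partition argument from the normal-case proof of Theorem~\ref{T2} and then invoke Lemma~\ref{partition}, with the dual inequality $\jmath(G^*)>0$ stepping in for the role played by normality. I will construct a sequence $S=S_1\cup S_2\cup\cdots\cup S_n$ of parts iteratively: pick an edge $e_1\in E(S)$ and let $S_1$ be the leaf or branch containing $e_1$; at each subsequent stage, choose an edge $e_{i+1}\in E(S)\setminus E(S_1\cup\cdots\cup S_i)$ with exactly one endpoint in $V(S_1\cup\cdots\cup S_i)$, and let $S_{i+1}$ be the part containing $e_{i+1}$. Hypothesis (b) (no three parts share a vertex), combined with the maximality of each leaf or branch in the simply connected $S$, should force the intersection $(S_1\cup\cdots\cup S_i)\cap S_{i+1}$ to be a single vertex, so that conditions (b) and (c) of Lemma~\ref{partition} hold.

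Next I will verify the per-part bound $|V(S_i)|\leq C_0\,|\partial_v S_i|$. For a branch $b$, hypothesis (a) makes it a path; since the minimum vertex degree in $G$ is $3$ while the path-degree is at most $2$, every vertex of $b$ has a neighbor outside $V(b)$, so $|V(b)|=|\partial_v b|$. For a leaf $\ell$, the set $F(\ell)\subset V(G^*)$ induces a finite simply connected subgraph of $G^*$ (since $\ell$ is a polygon, $D(\ell)$ is simply connected in $\mathbb{R}^2$), and $\jmath(G^*)>0$ yields
\[
|F(\ell)|\leq \jmath(G^*)^{-1}\,|\partial_v F(\ell)|_{G^*}.
\]
Combined with \eqref{facebound}, this gives $|V(\ell)|\leq 2|\partial_v\ell|+3|F(\ell)|$, reducing the leaves step to controlling $|\partial_v F(\ell)|_{G^*}$ in terms of $|\partial_v\ell|$. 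A key observation is that no two distinct leaves share an edge, since their union would otherwise form a larger polygon violating leaf maximality; thus every face in $\partial_v F(\ell)_{G^*}$ lies in $F(G)\setminus F(S)$, and tracing its cyclic boundary should match it to a vertex of $\partial_v\ell$ with controlled multiplicity.

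For the remaining lower bound $|\partial_v S|\geq n/\tau$, I will adapt the counting template of the normal-case proof. The degenerate parts are different here: without normality, a leaf $\ell$ can have $|\partial_v\ell|=2$ while still having arbitrarily large size, so one cannot simply invoke Lemma~\ref{>2}. However, the per-part bound of the previous step forces such a thin leaf to satisfy $|V(\ell)|\leq 2C_0$, and chains of such thin leaves are ruled out by $\jmath(G^*)>0$ applied to the union of their face sets in $V(G^*)$, just as in the analysis of the chain in Figure~\ref{problem}. Bounding the number of thin parts linearly in $n$ with a small constant and then invoking Lemma~\ref{partition} should deliver $|V(S)|\leq (1+2\tau)C_0\,|\partial_v S|$.

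The main technical obstacle is the leaves step. Without normality, the outer boundary of a polygon can revisit the same vertex at pinch points, so that $|\partial_e\ell|$ is arbitrarily larger than $|\partial_v\ell|$, and a naive reduction of $|\partial_v F(\ell)|_{G^*}$ to $|\partial_e\ell|$ loses too much. Handling these pinches carefully, using hypothesis (b) and the dual simple-connectedness of $F(\ell)$ in $G^*$, is the delicate heart of the argument and the place where every hypothesis of the lemma actually enters.
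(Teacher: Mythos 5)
There is a genuine gap, and it sits exactly where you locate it yourself: the per-part bound for leaves. Note first that a single polygon already satisfies hypotheses (a) and (b) of the lemma vacuously (it has no branches and only one part), so ``$|V(\ell)|\leq C_0|\partial_v\ell|$ for every leaf $\ell$'' is not a preparatory step but a special case of the lemma itself; reducing to it via the partition of the normal-case proof of Theorem~\ref{T2} is circular unless you can actually prove that case. Your sketch for it applies $\jmath(G^*)>0$ to $F(\ell)$ alone, which gives $|F(\ell)|\leq \jmath(G^*)^{-1}|\partial_f\ell|$, and then hopes to bound $|\partial_f\ell|$ by $|\partial_v\ell|$ ``with controlled multiplicity.'' That bound is false as a local/combinatorial statement: a long chain of $n$ quadrilaterals glued edge to edge and sandwiched between two faces of huge degree is a polygon with $|\partial_v\ell|=2$ (only the two end vertices carry outgoing edges, the interior corners having degree $3$ inside the chain) but $|\partial_f\ell|=n$. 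Such configurations are excluded only because they force $\jmath(G^*)=0$ --- and detecting that requires applying $\jmath(G^*)>0$ not to $F(\ell)$ but to the \emph{enlarged} face set consisting of $F(\ell)$ together with the outside faces attached along the boundary, whose number must be controlled by $|\partial_v\ell|$. That is precisely the argument the paper gives and your proposal does not: enumerate $\partial_v S$ along the boundary walk of $S\cup D(S)$ (hypotheses (a) and (b) enter only to guarantee each vertex is passed at most twice, so the walk meets $\partial_v S$ at most $m\leq 2|\partial_v S|$ times), observe that between consecutive such vertices there are no outgoing edges, so the whole stretch of $\partial_e S$ there lies on a single outside face $f_i$, form the face graph $T$ with $V(T)=V(S)\cup\bigcup_i V(f_i)$, note $\partial_f T\subset\{f_1,\dots,f_m\}$, apply $\jmath(G^*)>0$ to $T^*$ to get $|F(S)|\leq|F(T)|\leq 2\jmath(G^*)^{-1}|\partial_v S|$, and finish with \eqref{facebound} and Euler's formula. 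No partition into parts is needed at all.

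The same missing ingredient reappears in your treatment of condition (d) of Lemma~\ref{partition}: the claim that chains of thin leaves ``are ruled out by $\jmath(G^*)>0$ applied to the union of their face sets'' is again only an assertion, and making it quantitative requires the same boundary-walk/thickening argument, since the face sets of the thin leaves themselves may have small dual boundary only relative to the attached outside faces. (Smaller issues: elements of $\partial_v F(\ell)$ in $G^*$ lie in $F(\ell)$, not in $F(G)\setminus F(S)$, by the paper's inner-boundary convention; and ``no two distinct leaves share an edge because their union would be a larger polygon'' is not immediate, since the union may fail to have simply connected interior.) In short, the partition scaffolding you borrow from Theorem~\ref{T2} is sound but does not carry the weight of the lemma; the essential step --- converting $|\partial_v S|$ into a bound on the number of boundary faces of a suitable thickening and only then invoking $\jmath(G^*)>0$ --- is absent, so the proof as proposed does not go through.
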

Note that the subgraph $S$ in Lemma~\ref{chain} has the following property: for any vertex $v \in V(S)$ and
every sufficiently small neighborhood $U \ni v$ in $\mathbb{R}^2$,
$U \setminus (S \cup D(S))$ has at most two components.
\begin{proof}
Let $\{ v_1, v_2, \ldots, v_m \}$ be an enumeration of $\partial_v S$ along the boundary of $S$.
In other words, when walking along the topological boundary of $S \cup D(S)$ either clockwise or
counterclockwise, we enumerate the vertices in $\partial_v S$ in the order we encounter them,
allowing some multiple counts. However, any
vertex in $\partial_v S$ will not be counted more than twice by the assumptions (a) and (b). Consequently
we have $2|\partial_v S| \geq m$.

Note that $S$ is a simply connected graph with no outbound edge in $E(G) \setminus E(S)$
between $v_i$ and $v_{i+1}$, where $i$ is in mod $m$.
Thus for each $i \in \{1, 2, \ldots, m\}$, there is the face $f_i$ attached to $S$ between $v_i$ and $v_{i+1}$.
To be precise, $f_i$ is the face in $F(G) \setminus F(S)$
such that $E(f_i)$ contains all the edges in $\partial_e S$ between $v_i$ and $v_{i+1}$ (Figure~\ref{facegraph}).

\begin{figure}[tb]
\begin{center}
 \input{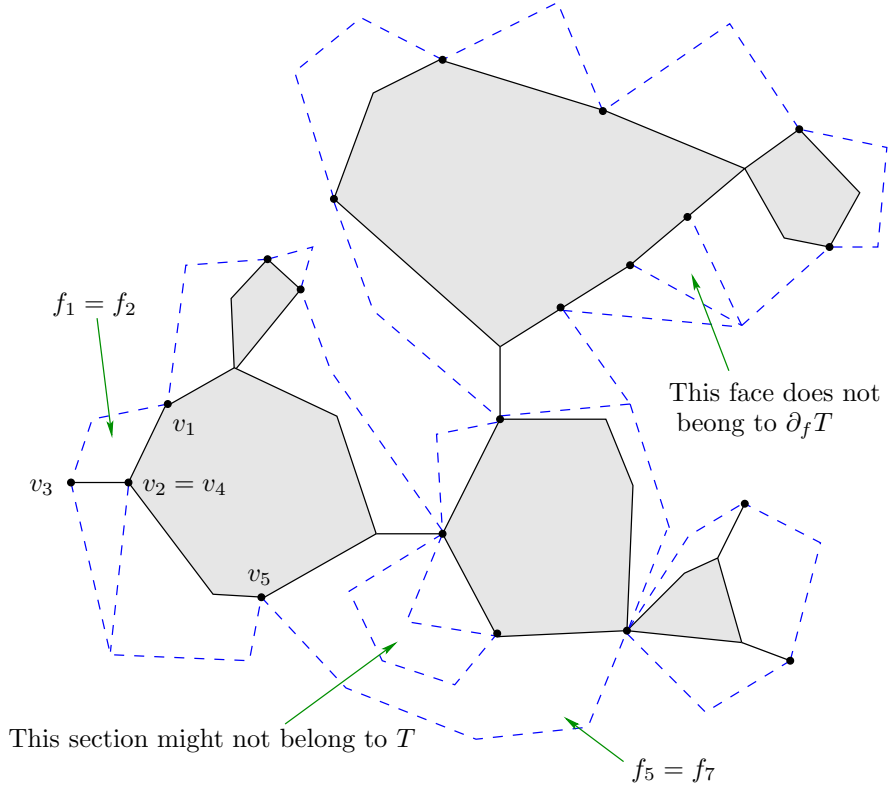}
 \caption{a sketch of a subgraph satisfying the assumptions in Lemma~\ref{chain}; this graph consists of 6 polygons and 5 edges, and dotted lines indicate the faces attached to $S$; the section that does not belong to $T$
 would be some union of faces in $F(G) \setminus F(T)$}\label{facegraph}
\end{center}
\end{figure}

Let $T$ be the subgraph of $G$ such that $V(T) = V(S) \cup \bigcup_{i=1}^m V(f_i)$.
Definitely $T$ is a face graph with $\partial_f T \subset \{ f_1, f_2, \ldots, f_m \}$, hence
our assumption $\jmath(G^*)>0$ implies that $|V(T^*)| \leq C_1 |\partial_v T^*|$ for $C_1 = \jmath(G^*)^{-1}$, or
\[
|F(T)| \leq C_1 |\partial_f T| \leq C_1 \cdot m \leq 2C_1 |\partial_v S|.
\]
 Thus by \eqref{facebound} we have
\begin{align*}
|V(S)| & = |E(S)| - |F(S)| + 1 \leq 2 |\partial_v S| + 2|F(S)| \\
& \leq 2 |\partial_v S| + 2|F(T)| \leq (2+ 4C_1) |\partial_v S|,
\end{align*}
as desired.
\end{proof}

\begin{corollary}\label{c2}
Suppose $G$ is a proper planar graph with bounded face degrees such that $|V(T)|\leq C|\partial_v T|$ for
every polygon $T \subset G$. If $S$ is a simply connected subgraph of $G$  satisfying the conditions (a) and (b)
in Lemma~\ref{chain}, then $|V(S)| \leq C_1 |\partial_v S|$ for some constant $C_1$ not depending on $S$.
\end{corollary}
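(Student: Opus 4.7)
The plan is to mirror the proof of Lemma~\ref{chain}, replacing the use of $\jmath(G^*)>0$ by the polygon isoperimetric inequality together with the bounded-face-degree hypothesis. Following Lemma~\ref{chain} verbatim at the outset, I would enumerate $\partial_v S = \{v_1,\ldots,v_m\}$ along the outer boundary of $S\cup D(S)$, invoking conditions (a) and (b) to guarantee $m \leq 2|\partial_v S|$, let $f_i$ denote the face attached to $S$ between $v_i$ and $v_{i+1}$, and set $T := S \cup f_1 \cup \cdots \cup f_m$.

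The hypothesis available here applies only to polygons, so next I would build a polygon $\tilde T \supseteq T$ by absorbing every bounded component of $G \setminus T$ (together with all vertices, edges, and faces of $G$ inside it) into $T$. Since this hole-filling leaves $G\setminus\tilde T$ connected and forces $D(\tilde T)$ to be simply connected, $\tilde T$ is a polygon in the sense of Section~\ref{S2}. The hypothesis of the corollary then gives $|V(\tilde T)| \leq C |\partial_v \tilde T|$, and since $V(S)\subseteq V(T) \subseteq V(\tilde T)$ we obtain $|V(S)| \leq C|\partial_v \tilde T|$.

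It therefore remains to show $|\partial_v \tilde T| \leq (2D+1)|\partial_v S|$, where $D=\max_{f\in F(G)} \deg f < \infty$. A vertex strictly interior to an absorbed hole has all its neighbors in $\tilde T$, so $\partial_v \tilde T \subseteq \partial_v T$. A vertex of $\partial_v T$ that lies in $V(S)$ must actually lie in $\partial_v S$ by properness, because an interior vertex of $S$ only meets faces of $F(S) \subseteq F(T)$; this contributes at most $|\partial_v S|$. A vertex of $\partial_v T$ lying outside $V(S)$ belongs to some $V(f_i)\setminus V(S)$, and the bounded face degree bounds this contribution by $\sum_i |V(f_i)\setminus V(S)| \leq Dm \leq 2D|\partial_v S|$. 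Chaining the estimates yields $|V(S)| \leq C(2D+1)|\partial_v S|$, so one may take $C_1 = C(2D+1)$. The main obstacle I expect is the planar-topology bookkeeping needed to check that $\tilde T$ is really a polygon (that hole-filling produces a simply connected interior and a single outer boundary curve) and that the inclusion $\partial_v \tilde T \subseteq \partial_v T$ holds cleanly; both are intuitively clear from the definitions, but they deserve a careful verification from the properness of $G$ and the construction of $\tilde T$.
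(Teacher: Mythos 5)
Your reduction to a single enlarged polygon does not work: the decisive claim that filling in the bounded components of $G\setminus T$ turns $T=S\cup f_1\cup\cdots\cup f_m$ into a polygon $\tilde{T}$ is false in general. Hole-filling cures enclosed holes, but it does not cure pinch points, and the subgraphs allowed by conditions (a) and (b) of Lemma~\ref{chain} are precisely chains of polygons and paths glued at cut vertices, so $\tilde{T}$ inherits such pinches. Concretely, let $G$ be a plane triangulation with all vertex degrees at least $7$ (face degrees are bounded by $3$ and $\jmath(G)>0$ is well known, so the hypotheses of the corollary hold), and let $S$ consist of two triangular faces $\Delta_1=vab$ and $\Delta_2=vcd$ meeting only at the vertex $v$, chosen so that each of the two ``gaps'' in the cyclic order of faces at $v$ between $\Delta_1$ and $\Delta_2$ contains at least three faces. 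This $S$ satisfies (a) and (b) of Lemma~\ref{chain}, the attached faces are exactly the six faces across the edges of $\Delta_1$ and $\Delta_2$, and in a generic such triangulation $G\setminus T$ has no bounded component, so $\tilde{T}=T$; but then $\tilde{T}^*$ has two components (the dual star of $\Delta_1$ with its three attached faces, and likewise for $\Delta_2$), so by the remark in Section~\ref{S2} that $D(S)$ is connected if and only if $S^*$ is connected, $D(\tilde{T})$ is disconnected and $\tilde{T}$ is not a polygon. Hence the inequality $|V(\tilde{T})|\leq C|\partial_v\tilde{T}|$ is simply not available. Your boundary bookkeeping is essentially fine ($\partial_v\tilde{T}\subseteq\partial_v T$, and $|\partial_v T|\leq(2D+1)|\partial_v S|$; note the membership $\partial_v T\cap V(S)\subseteq\partial_v S$ is immediate from $V(S)\subseteq V(T)$, no properness needed), but the central application of the hypothesis fails. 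This is also why the proof of Lemma~\ref{chain} cannot be transplanted verbatim: there the hypothesis $\jmath(G^*)>0$ applies to the possibly disconnected dual $T^*$, whereas here only polygons are covered.

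One could try to repair your scheme by applying the hypothesis to each component of $\tilde{T}^*$ separately, but then you must prove that each such piece is itself a polygon (nested pinches can produce pieces whose holes are filled by other pieces) and control the sum of their vertex boundaries by a multiple of $|\partial_v S|$; at that point you are effectively rebuilding a decomposition argument. The paper takes that route directly and more economically: it partitions $S$ into its parts as in the normal case of Theorem~\ref{T2} (leaves are polygons, to which the hypothesis applies; branches are trees handled via \eqref{facebound}), and uses the bounded face degree only to verify condition (d) of Lemma~\ref{partition}, namely every part meets $\partial_e S$, each attached face $f_i$ carries at most $M$ edges of $\partial_e S$, and $m\leq 2|\partial_v S|$, so the number of parts is at most $2M|\partial_v S|$; Lemma~\ref{partition} then yields the desired inequality.
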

\begin{proof}
We follow the proof of Theorem~\ref{T2}, the normal case, and enumerate the parts $S_1, S_2, \ldots, S_n$ of $S$
so that they satisfy the conditions (a), (b), and (c) of Lemma~\ref{partition}. Then as in Lemma~\ref{chain},
we enumerate $\partial_v S$ by $v_1, v_2, \ldots, v_m$, where some vertices are counted twice, and
let $f_1, f_2, \ldots, f_m$ be the faces attached to $S$ between $v_i$ and $v_{i+1}$, where $i$ is in mod $m$.

By the definition of a part, we have $\partial_e S \cap E(S_i) \ne \emptyset$ for each $i=1,2,\ldots, n$, so
we must have $|\partial_e S| \geq n$. On the other hand,  face degrees of $G$ are bounded above, say by $M$,
so the number of edges in $E(f_i) \cap \partial_e S$ cannot exceed $M$. Thus the inequality $m M \geq |\partial_e S|$ holds.
Since $m \leq 2|\partial_v S|$,  we have $|\partial_v S| \geq n/(2M)$.
The corollary follows from Lemma~\ref{partition}.
\end{proof}

The problem in Figure~\ref{problem} is caused by the fact that face degrees of $G$ are not bounded,
and we have seen so far that this phenomenon does not occur under the assumptions of either
Theorem~\ref{T} or Theorem~\ref{T2}. However, there is still a problem in partitioning a subgraph
into leaves and branches, since we still have to worry about the unboundedness of \emph{vertex} degrees. For example
the proof in Lemma~\ref{chain} does not work for a general subgraph $S \subset G$
since when one walks along the topological boundary of $S \cup D(S)$,
there may be a vertex in $\partial_v S$ which appears too many times. So to overcome this obstacle,
one may need a different kind of partition.

\begin{lemma}\label{key}
Suppose $G$ is a proper planar graph and $S$ is a finite simply connected subgraph of $G$. Then there exists a partition
$S_1 \cup S_2 \cup \cdots \cup S_n = S$ which satisfies the conditions (b), (c), and (d) of Lemma~\ref{partition}.
Furthermore, we can make the partition so that each $S_i$ satisfies the conditions (a) and (b) of Lemma~\ref{chain}.
\end{lemma}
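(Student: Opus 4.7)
The plan is to peel $S$ into ``elementary'' pieces --- each one a polygon (leaf) or a path --- and order them so that successive unions meet in a single vertex.

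First I would decompose $S$ into its parts (leaves and branches). Because $S$ is simply connected, the parts and their shared vertices form a tree-like structure (the analogue of the block-cut tree of $S$), in which adjacent parts share exactly one vertex. I would then refine this by slicing each branch-part that is not a path at its branching vertices into maximal sub-paths. The resulting pieces are each either polygons or paths.

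Second, I would order the refined pieces $S_1,\dots,S_n$ by a depth-first traversal of the tree-like structure, rooted at an arbitrary piece. For each $i\ge 2$, $S_i$ then meets $S_1\cup\cdots\cup S_{i-1}$ at exactly one vertex (the attachment vertex linking $S_i$ to its parent), and $S_1\cup\cdots\cup S_i$ remains an induced subgraph. This gives conditions (b) and (c) of Lemma~\ref{partition}. Since each $S_i$ is itself a single polygon or a single path, its own part decomposition is trivially $\{S_i\}$, so conditions (a) and (b) of Lemma~\ref{chain} are vacuous.

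Third, I would verify condition (d), that $|\partial_v S|\ge n/\tau$ for a universal constant $\tau$. The telescoping bound from the proof of Lemma~\ref{partition}, namely $|\partial_v(S_1\cup\cdots\cup S_i)|\ge |\partial_v(S_1\cup\cdots\cup S_{i-1})|+|\partial_v S_i|-2$, reduces matters to bounding the number of pieces with $|\partial_v S_i|\le 2$. Since $G$ has minimum vertex degree at least $3$ and faces in a simple planar graph admit no chords, every single-face leaf has $|\partial_v S_i|\ge\deg(f)\ge 3$ and every path-piece of length at least $2$ has $|\partial_v S_i|\ge 3$ (its interior vertices always carry outgoing edges in $G$). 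The pieces with $|\partial_v S_i|=2$ are therefore single-edge branches and possibly some ``thin'' multi-face leaves; the single-edge pieces are handled as in the normal-case proof of Theorem~\ref{T2}, while any thin multi-face leaf is absorbed into an adjacent piece at their common attachment vertex, producing a combined $S_i$ consisting of two parts meeting at one vertex --- still satisfying the conditions in Lemma~\ref{chain} --- so that the effective boundary count per piece remains at least $3$.

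The hardest step is the counting in the last paragraph. Without the normality of $G$, Lemma~\ref{>2} only gives $|\partial_v S_i|\ge 2$ rather than $\ge 3$ for arbitrary polygon pieces, and the merging trick above is what recovers the effect of normality and keeps $\tau$ universal.
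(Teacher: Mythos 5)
There is a genuine gap, and it sits exactly where the paper's proof does its real work: the verification of condition (d) of Lemma~\ref{partition}. Your plan makes each piece a single polygon or a single path and then tries to ensure ``effective boundary count at least $3$'' per piece by absorbing any thin multi-face leaf (a polygon with $|\partial_v S_i|=2$) into \emph{one} adjacent piece. This local merging does not work. Consider the configuration of Figure~\ref{problem}: $S$ is a long chain of polygons glued consecutively at single vertices and enclosed by two faces of enormous degree. Every leaf in this chain has exactly two boundary vertices, the union of two (or any number of) consecutive thin leaves is again a subgraph with only two boundary vertices, and $|\partial_v S|$ stays bounded while the number of parts $n$ grows without bound. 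So after your merging step the pieces still have boundary count $2$, the telescoping inequality \eqref{22} gains nothing, and no universal $\tau$ in condition (d) can be extracted --- unless you are prepared to merge an \emph{entire maximal chain} of thin pieces into one $S_i$. That is precisely what the paper does, and it is the reason Lemma~\ref{chain} is formulated for pieces that are whole chains of polygons and edges (conditions (a), (b) there are not meant to be vacuous): the paper contracts each leaf of $S$ to a vertex to obtain a tree $T$, cuts $T$ at its vertices of degree $\geq 3$ into maximal paths $T_j$, takes the corresponding chains $S_j\subset S$ as the pieces, and then bounds their number by $2|\partial_v S|$ through an injection of the degree-one vertices of $T$ into $\partial_v S$ (this is where Lemma~\ref{>2}, i.e.\ properness, enters), together with the edge count $m=|E(T')|\leq 2|B|$ for the reduced tree $T'$. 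Your proposal contains no analogue of this global counting, which is the heart of the lemma.

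A secondary inaccuracy: your bound $|\partial_v S_i|\geq \deg(f)\geq 3$ for single-face leaves rests on the claim that faces of a simple planar graph admit no chords, which is not implied by the paper's hypotheses (a chord of a face boundary may exist elsewhere in the graph); moreover leaves are \emph{maximal} polygons and are typically unions of many faces, so this case analysis does not reach the pieces that actually cause trouble. The path-piece estimates are fine, but without normality the only general lower bound for a polygonal piece is $|\partial_v S_i|\geq 2$ from Lemma~\ref{>2}, and the argument must be organized, as in the paper, so that long runs of such pieces are counted as one.
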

\begin{proof}
Suppose $S \subset G$ is given, and let $V_1 := \{L_1, L_2, \ldots, L_k \}$ be the set of all the leaves of $S$.
Define $V^i$, $i=1,2, \ldots, k$, such that
\[
V^i := \{ v \in \partial_v L_i : \mbox{there exists an edge } e \in E(S) \setminus E(L_i) \mbox{ with one end at } v \}.
\]
\begin{figure}[tb]
\begin{center}
 \includegraphics{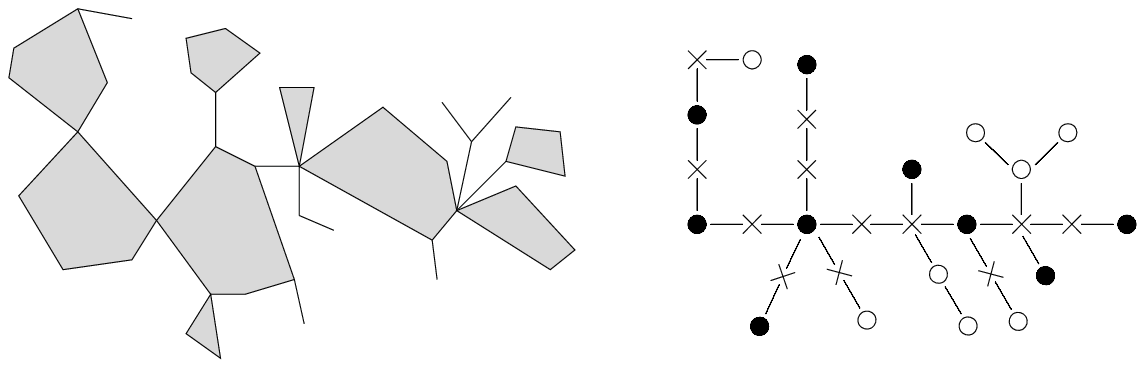}
 \caption{a sketch of a simply connected graph $S$ (left) and the tree $T$ obtained from $S$ (right); on the tree $T$,
  the symbols $\CIRCLE, \Circle$, and $\times$ indicate the vertices in $V_1$, $V_2$, and $V_3$, respectively.}\label{partition1}
\end{center}
\end{figure}
Then by reducing each $L_i$ to a vertex, connecting it to the vertices in $V^i$,
and keeping the rest of $S$ unchanged, we get a new finite tree $T$ (Figure~\ref{partition1}).
The reason we add to $T$ the vertices in $V^i$ is because we do not want to change the combinatorial pattern
of $S$. Formally, $T$ is the graph with $V(T) := V_1 \cup V_2 \cup V_3$, where $V_1$ is as above,
$V_2 := V(S) \setminus V(L_1 \cup L_2 \cup \cdots \cup L_k)$, and $V_3 := \bigcup_{i=1}^k V^i$, and
we define the edge set $E(T)$ so that
$[v, w] \in E(T)$ if one of the following holds: (1) $v, w \in V_2 \cup V_3$ and $[v, w] \in E(S)$, or (2)
$v \in V_1, w \in V_3$, and $w \in V(v)$, or (3) $v \in V_3, w \in V_1$, and $v \in V(w)$.
Such $T$ must be a tree since $S$ is simply connected.

In $T$, let $A = \{ v \in V(T) : \deg_T v \geq 3 \}$. Here $\deg_T v$ denotes the number of edges in $T$
with one end at $v$. Then we consider $T$ a simplicial complex, and let $\{ T_1, T_2, \ldots, T_m \}$ be
the set of the closures of  each components of $T \setminus A$ (Figure~\ref{partition2}).
Note that each $T_j$ is isomorphic to a finite path.
Moreover for $i \ne j$, $T_i \cap T_j$ is either empty or a vertex in $A$, hence we can enumerate $\{T_j\}$
so that $T_1 \cup T_2 \cup \cdots \cup T_{j}$ is connected for  $j =1,2, \ldots, m$ and
$(T_1 \cup T_2 \cup \cdots \cup T_{j-1}) \cap T_j$ is a single vertex for  $j =2,3, \ldots, m$.
\begin{figure}[tb]
\begin{center}
 \input{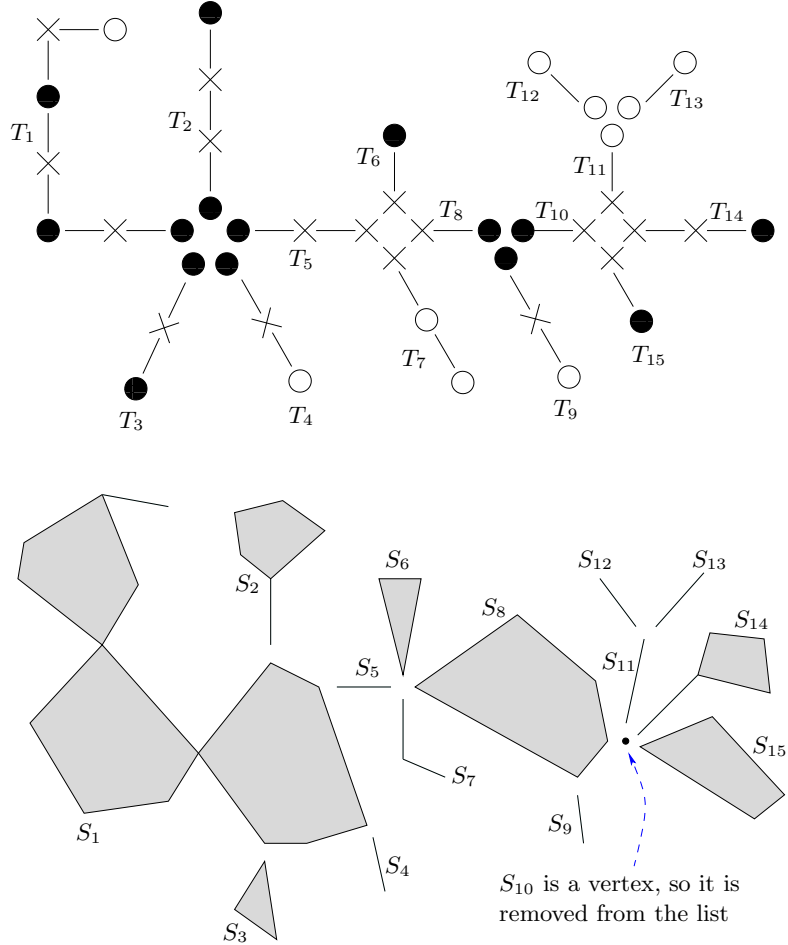}
 \caption{$T$ is partitioned so that each $T_j$ is a path (top), and $S$ is partitioned
 associated with the partition of $T$ (bottom).}\label{partition2}
\end{center}
\end{figure}

For each $j$ we assign a subgraph $S_j \subset S$ in an obvious way, but some modification is needed.
For $S_1$, we define it so that
\[
V(S_1) = \Bigl( V(T_1) \cap (V_2 \cup V_3) \Bigr) \cup \bigcup_{v \in V(T_1) \cap V_1} V(v),
\]
and for $j =2,3, \ldots, m$ we define $S_j$ so that
\[
V(S_j) = \Bigl( V(T_j) \cap (V_2 \cup V_3) \Bigr) \cup
\bigcup_{v \in V(T_j \setminus (T_1 \cup \dots \cup T_{j-1})) \cap V_1} V(v).
\]
In other words, $S_j$ is the graph consisting of the vertices in $T_j \cap (V_2 \cup V_3)$ and the leaves
in $T_j \cap V_1$, but from $S_j$ we remove the leaf in $T_1 \cup \cdots \cup T_{j-1}$, if any.
But if a leaf is removed from $S_j$,  it must correspond to a vertex $v = L_i \in V(T)$ such that
$\{ L_i \} =(T_1 \cup T_2 \cup \cdots \cup T_{j-1}) \cap T_j$. Then since $T_j$ contains a vertex
in $V^i$, $(S_1 \cup \cdots \cup S_{j-1}) \cap S_j$ must be a single vertex.
Clearly $(S_1 \cup \cdots \cup S_{j-1}) \cap S_j$ is also a single vertex when
no leaf is removed from $S_j$.

Some of $S_j$ could be just a vertex in $V_3$, so we eliminate all such $S_j$'s from the list.
Now we have just obtained a sequence of subgraphs $\{ S_{n_1}, S_{n_2}, \ldots, S_{n_s} \}$
which satisfies the condition (c) of Lemma~\ref{partition}. The condition (b) of Lemma~\ref{partition}
easily follows from the construction. Moreover, since each $T_{n_i}$ is a path,
each $S_{n_i}$  satisfies the conditions (a) and (b) of Lemma~\ref{chain}.

Now it remains to verify the condition (d) of Lemma~\ref{partition}.
Let $B := \{ v \in V(T) : \deg_T v =1\}$. If $v \in B$, then $v \notin V_3$ by the
definition of $V^i$. Thus $v \in V_1 \cup V_2$.
If $v \in V_2$, then $v$ is in fact a vertex in $\partial_v S$. If $v \in V_1$, it corresponds
to a leaf $L_i$ with $|V^i| =1$. Then since $|\partial_v L_i| \geq 2$ by Lemma~\ref{>2}, we can assign $v$
to a vertex $w \in \partial_v S \setminus V_3$. Thus there exists an injection map from
$B$ into $\partial_v S$, so the inequality $|B| \leq |\partial_v S|$ holds.

Now in $T$, we replace each $T_j$ by an edge and get another tree $T'$.
Then the number of edges in $T'$ is exactly $m$,
and every vertex in $T'$ has degree either 1 or $\geq 3$. Also the number of vertices $v \in V(T')$ with
$\deg_{T'} v =1$ is
exactly $|B|$. Therefore by a computation similar to \eqref{facebound} we obtain
\[
m = |E(T')| \leq 2|B| \leq 2 |\partial_v S|.
\]
Since $s \leq m$, we conclude that the sequence $\{ S_{n_1}, S_{n_2}, \ldots, S_{n_s} \}$ also satisfies
the condition (d) of Lemma~\ref{partition}. Since $S = S_{n_1} \cup S_{n_2} \cup \cdots \cup S_{n_s}$,
this finishes the proof.
\end{proof}

\begin{proof}[Proofs of Theorems~\ref{T} and \ref{T2}.]
Theorem~\ref{T}(a) was already proved. For (b) of Theorem~\ref{T}, the implication $\jmath(G^*)>0 \to \jmath(G)>0$
is a consequence of Lemmas~\ref{partition}, \ref{chain}, and \ref{key}, and the converse comes from the duality.
Theorem~\ref{T}(c) is an easy consequence of Theorem~\ref{T}(a) and (b),
so we leave the details to the reader.

The normal case of Theorem~\ref{T2} was already proved, and the case when face degrees of $G$ are bounded
comes from Lemmas~\ref{partition}, \ref{key}, and Corollary~\ref{c2}.
This completes the proofs of Theorems~\ref{T} and \ref{T2}.
\end{proof}

\section{Combinatorial curvatures and strong isoperimetric inequalities}\label{Scur}
In this section we deal with combinatorial curvatures and prove Theorem~\ref{TTT}.
But as explained in the introduction, Theorem~\ref{TTT}(a) was essentially proved in \cite{Woe}.
To see it, and to prove Theorem~\ref{TTT}(b),
suppose $S$ is a subgraph of a proper graph $G$. As before, we can assume that
$S$ is simply connected without loss of generality.

When $\overline{\phi}(G) < 0$, Woess showed the inequality $|E(S)| \leq C |\partial_v S|$, where $C$ is
an absolute constant, in the proof of Theorem~1 of \cite{Woe}. But since $|V(S)| \leq |E(S)|+1 \leq 2|E(S)|$
by the Euler's formula, the conclusion $\jmath(G)>0$ follows immediately. When $\overline{\chi}(G) < 0$,
the inequality $|F(S)| \leq C |\partial_v S|$ was obtained in the proof of Theorem~2(b) of \cite{Woe},
where $C$ is an absolute constant. Then by \eqref{facebound} and the Euler's formula we have
\begin{equation}\label{cure}
|V(S)| = |E(S)|-|F(S)|+1 \leq 2|\partial_v S| + 2|F(S)| - 2 \leq (2+2C)|\partial_v S|,
\end{equation}
showing that $\jmath(G)>0$.

We next consider Theorem~\ref{TTT}(b).
When vertex degrees of $G$ are bounded, Theorem~\ref{TTT}(b) follows from Theorem~2(a) of \cite{Woe}
as explained in the introduction, but let us prove it in a different way.
Since $\overline{\psi}(G) = \overline{\chi}_1(G^*)$, where $\overline{\chi}_1(G)$ is defined in \eqref{chibar},
the assumption $\overline{\psi}(G) < 0$ implies the inequality $\overline{\chi}_1(G^*) <0$.
Then we follow the proof of Theorem~2(b) of \cite{Woe} and use \eqref{cure}, and confirm that
there exists an absolute constant $C$ such that $|S| \leq C |\partial_v S|$ for every \emph{polygon}
$S \subset G^*$.
Thus we have $\jmath(G^*) >0$ by Theorem~\ref{T2} if either $G^*$ is normal or face degrees of $G^*$
are bounded. Since $G$ is normal if and only if $G^*$ is normal, and face degrees of $G^*$ are bounded
if and only if vertex degrees of $G$ are bounded, Theorem~\ref{TTT}(b) follows from Theorem~\ref{T}(b).

For Theorem~\ref{TTT}(c), we construct a planar graph $G$ such that $\overline{\psi}(G) < 0$
but $\jmath(G) =0$. Note that such graph cannot be normal.

For $n \in \mathbb{Z}$, let $S_n$ be the finite graph such that
\[
V(S_n) = \{ o_n, o_{n+1}, b_1^n, b_2^n, v_1^n, v_2^n, \ldots, v_{2|n|-1}^n, v_{2|n|}^n \}
\]
with edges $[o_{n+i}, b_j^n], [o_{n+i}, v_{2k-1}^n], [o_{n+i}, v_{2k}^n],$ and $[v_{2k-1}^n, v_{2k}^n]$,
where $0 \leq i \leq 1$, $1 \leq j \leq 2$, and $1 \leq k \leq |n|$. Then
for each $n \in \mathbb{Z}$, $S_n$ and $S_{n+1}$ share the vertex $o_{n+1}$, so
$S := \bigcup_{n=-\infty}^\infty S_n$ is a connected planar graph.
Furthermore, we can embed it into $\mathbb{R}^2$
in such a way that each $v_k^n$ is enclosed by the cycle $[o_n, b_1^n] \cup [b_1^n, o_{n+1}] \cup
[o_{n+1}, b_2^n] \cup [b_2^n, o_{n}]$. See Figure~\ref{nonnormal}.
\begin{figure}[h]
\begin{center}
 \input{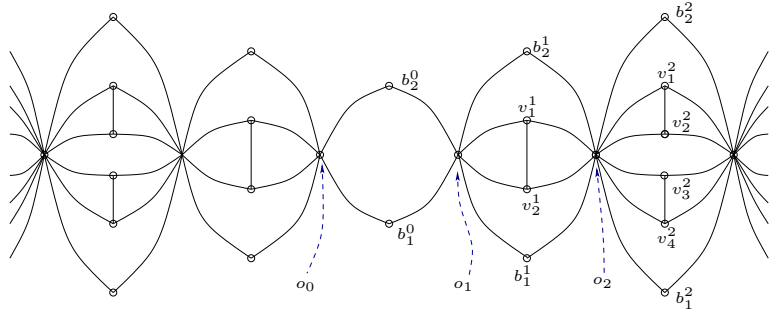}
 \caption{the graph $S$}\label{nonnormal}
\end{center}
\end{figure}

On the unbounded faces of $S$, we add vertices and edges so that the resulting graph $G$ satisfies
the following properties: $G$ is a simple proper planar graph with $3 \leq \deg v, \deg f < \infty$ for every
$v \in V(G)$ and $f \in F(G)$,
$\deg v \geq 7$ for every added vertex $v \in V(G) \setminus V(S)$, $\deg b_j^n \geq 7$ for
$j=1,2$ and $n \in \mathbb{Z}$, and $\deg o_n \geq 14 |n| + 14$ for all $n \in \mathbb{Z}$.
In other words, we for example triangulate both of the unbounded faces of $S$ so that every added vertices
and $b_j^n$'s are of degree at least 7 and the degrees of $o_n$ are huge enough. Note that
this can be done by mathematical induction.

It is easy to see that $\kappa(G)=\jmath(G)=0$ since $|\partial_v S_n| = |\partial_e S_n|=4$
but $|F(S_n)| = 3|n| +1$ and $|V(S_n)|=2|n| +4$. On the other hand,
we have $\psi(v) \leq -1/6$ if $v \in V(G) \setminus V(S)$ or $v = b_j^n$ for some $j=1,2$ and $n \in \mathbb{Z}$.
Also direct computation shows that
\begin{align*}
\psi(v_k^n) &= 1 - \frac{3}{2} + \frac{2}{3} + \frac{1}{4} = \frac{5}{12}  \quad \mbox{for all } n \in \mathbb{Z}
       \mbox{ and } 1 \leq k \leq 2|n|;\\
\psi(o_n)  &\leq 1 - \frac{\deg o_n}{2} + \frac{\deg o_n}{3} \leq - \frac{ 7 |n| +4}{3} \quad \mbox{for all } n \in \mathbb{Z}.
\end{align*}
Thus the vertex  curvature function $\psi$ assumes a positive value only at $v_k^n$, but
they are dominated by the negative vertex curvature at $o_n$. In fact, for each $o_n$ there are
at most $4|n|+2$ neighboring vertices $v_k^n$ and $v_k^{n-1}$, so
\[
\psi (o_n) + \sum_{k=1}^{2|n|} \psi(v_k^n) + \sum_{k=1}^{2|n-1|} \psi(v_k^{n-1})
\leq - \frac{ 7 |n| +4}{3} + \frac{5(4|n|+2)}{12} = -\frac{4|n| +3}{6}.
\]
This means that if $T$ is a finite connected subgraph of $G$ with $|V(T)| \geq 3$, then
$\sum_{v \in V(T)} \psi(v) \leq -|V(T)|/6$. Consequently we have  $\overline{\psi}(G) \leq -1/6 < 0$,
and this completes the proof of Theorem~\ref{TTT}. Note that the face graph $U_n \subset G^*$
with $F(U_n)= V(U_n^*)=\{ v_k^n : 1 \leq k \leq 2|n| \}$ is a connected graph which looks similar to the one
in Figure~\ref{problem}, while $U_n^*$ is a disconnected graph in $G$ (Figure~\ref{dual}).
\begin{figure}[hbt]
\begin{center}
 \input{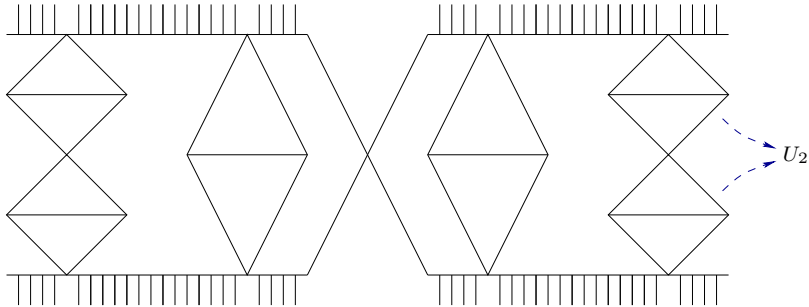}
 \caption{the dual graph of $G$; the subgraph $U_2$ corresponds to the disconnected
  graph $U_2^* = [v_1^2, v_2^2] \cup [v_3^2, v_4^2]$.}\label{dual}
\end{center}
\end{figure}

\section{Gromov hyperbolicity and strong isoperimetric inequalities}\label{SG}
Let $X$ be a geodesic metric space; that is, $X$ is a metric space such
that for every $a, b \in X$ there is a geodesic line
segment $\gamma$ from $a$ to $b$ such that $\mbox{dist}(a,b)= \mbox{length}(\gamma)$.
Then $X$ is called $\delta$-\emph{hyperbolic} if every geodesic triangle $\triangle \subset X$
is $\delta$-\emph{thin}; i.e., any side of $\triangle$ is contained in the $\delta$-neighborhood of
the union of the other two sides. If $X$ is $\delta$-hyperbolic for some $\delta \geq 0$, we just say
that $X$ is hyperbolic in the sense of Gromov, or \emph{Gromov hyperbolic}. For other equivalent
definitions and general theory about Gromov hyperbolic spaces, we refer \cite{Gro, CDP, GH}.
Note that every connected graph can be realized as a geodesic metric space, where the metric is
the simplicial metric such that every edge is of length 1.

Now suppose $\varphi : [\alpha, \beta] \subset \mathbb{R} \to X$ is a path; i.e., a continuous function. We say that
$\varphi$ is $t$-\emph{detour} if there exists a geodesic segment
$\gamma$ from $\varphi(\alpha)$ to $\varphi(\beta)$ and a point $z \in \gamma$ such that
$\mbox{Im} (\varphi) \cap B(z, t) = \emptyset$. Here $B(z,t)$ denotes the open ball with center $z$ and radius $t$.
The detour growth function $g_X : (0, \infty) \to [0, \infty]$ is defined by
\[
 g_X (t) := \inf \{ \mbox{length}(\mbox{Im}\varphi) : \varphi \mbox{ is a } t \mbox{-detour map} \}
\]
with the convention $g_X (t) = \infty$ when there is no rectifiable $t$-detour map. Then it is known \cite{Bonk}
that a geodesic metric space
$X$ is Gromov hyperbolic if and only if $\lim_{t \to \infty} g_X (t) /t = \infty$, which
is what we will use for a proof of Theorem~\ref{TG}.

Suppose $X_1$ and $X_2$ are metric spaces. A function $f: X_1 \to X_2$ is called
a \emph{rough isometry}, or  \emph{quasi-isometry}, if there exist constants $A \geq 1$, $B \geq 0$, and
$C \geq 0$ such that for all $x, y \in X_1$ we have
\[
\frac{1}{A} \, \mbox{dist}(x,y) - B \leq \mbox{dist}\bigl( f(x), f(y) \bigr) \leq A \, \mbox{dist}(x,y) + B,
\]
and for every $w \in X_2$ there exists $x \in X_1$
such that $\mbox{dist}(f(x), w) \leq C$. We say that $X_1$ is roughly isometric to $X_2$ if there exists a rough
isometry from $X_1$ to $X_2$, and it is not
difficult to see that rough isometries define an equivalence relation on the space of metric spaces.
Moreover, it is well known that if $X_1$ is roughly isometric to $X_2$, then $X_1$ is Gromov hyperbolic
if and only if $X_2$ is Gromov hyperbolic (cf. \cite[p.~35]{CDP} or \cite[p.~6]{BSc}),
and if $G_1$ and $G_2$ are roughly isometric graphs of bounded vertex degree, then $\jmath(G_1)>0$
if and only if $\jmath(G_2) >0$  \cite[Theorem~(7.34)]{So2}.
Note that we already used the latter fact in the introduction.

Now we are ready to prove Theorem~\ref{TG}. For (a), the following proof is suggested by Mario Bonk.
Suppose $G$ is a planar graph of bounded face degree
such that $\kappa(G) > 0$, or equivalently, $\jmath(G) > 0$. We assume without loss of generality
that $G$ is a triangulation graph of the plane, since otherwise we can add
bounded number of vertices and edges on every face of $G$ to obtain a triangulation graph $G'$.
This is possible because face degrees of $G$ are bounded.
Then obviously  $G'$ is roughly isometric to $G$,
so we have $G$ is Gromov hyperbolic if and only if $G'$ is Gromov hyperbolic. Moreover, in this case $(G')^*$ is also
roughly isometric to $G^*$, so Theorem~1(b) and Theorem~(7.34) of \cite{So2} imply that $\jmath(G') > 0$
since vertex degrees of $(G')^*$ and $G^*$ are bounded.
(Alternatively, it is not difficult to show that $\kappa(G') > 0$ directly.)

Let $\varphi$ be a $t$-detour map from $a \in G$ to $b \in G$.
By the definition there exist a geodesic  segment $\gamma$ from $a$ to $b$,
and a point $z \in \gamma$ such that $\varphi \cap B(z, t) = \emptyset$,
where we denoted $\mbox{Im}(\varphi)$ by $\varphi$ for simplicity. Furthermore
by shrinking $\gamma$ and $\varphi$ if necessary, we can assume that $\gamma$ meets $\varphi$ only at
$a$ and $b$. Thus we can treat $\gamma \cup \varphi$ as a topological triangle with vertices $a, b$, and $z$.

Let $\Gamma$ be the subgraph of $G$ whose vertices are those on $\gamma$, $\varphi$, and the
bounded component of $G \setminus (\gamma\cup \varphi)$. Then since $G$ is a triangulation graph,
$\Gamma$ becomes a triangulation of the 2-dimensional simplex $\triangle$ with $\partial \triangle = \gamma \cup \varphi$.
Let $\gamma_0 = \gamma \cap B(z, t/4)$,
$\gamma_a$ be the component of $\gamma \setminus \gamma_0$ containing $a$, and
$\gamma_b$ be the component of $\gamma \setminus \gamma_0$ containing $b$. Define
\begin{align*}
A_1 &:= \{ v \in V(\Gamma) : \mbox{dist}(v, \gamma_0) \leq t/4 \mbox{ and } \mbox{dist}(v, \gamma_0)
                              \leq  \mbox{dist}(v, \gamma_a \cup \gamma_b) \},\\
A_2 &:= \{ v \in V(\Gamma) \setminus A_1 : \mbox{dist}(v, \gamma_a) <  \mbox{dist}(v, \gamma_b) \}, \\
A_3 &:= \{ v \in V(\Gamma) \setminus A_1 : \mbox{dist}(v, \gamma_b) \leq  \mbox{dist}(v, \gamma_a) \}.
\end{align*}
It is clear that $\varphi \cap A_1 = \emptyset$. Otherwise there exists $v \in \varphi$ and $w \in \gamma_0$
such that $\mbox{dist}(v, w) \leq t/4$, so we must have
$\mbox{dist}(v, z) \leq \mbox{dist}(v, w) + \mbox{dist}(w, z) \leq t/2$,
contradicting the assumption $\varphi \cap B(z, t) = \emptyset$.
Moreover if $v \in \gamma$ lies between $a$ and $z$, that is, if $v$ lies on the side
opposite the vertex $b$ of $\gamma \cup \varphi$,
then $v$ is definitely in either $A_1$ or $A_2$. Similarly if $v \in \gamma$ lies between $b$ and $z$,
then $v$ belongs to $A_1 \cup A_3$. Thus if we label every vertex $v \in A_i$ by $i \in \{ 1,2,3 \}$,
it becomes so called \emph{Sperner labeling} hence there exists a triangle in $\Gamma$ with vertices
$v_i \in A_i$, $i=1,2,3$, by Sperner's lemma \cite[p.~124]{BSc}.

Let $B:=B(v_1, t/8)$ and  we claim that $B \cap (\varphi \cup \gamma) = \emptyset$ for $t \geq 12$. First,
note that $B \cap \varphi = \emptyset$ since otherwise there exists $v \in \varphi$
such that $\mbox{dist}(v,z) \leq 5t/8$. If $\mbox{dist}(v_1, \gamma) \leq t/8$, then  because
$v_1 \in A_1$ we have $\mbox{dist}(v_1, \gamma_0) \leq t/8$. But because $v_2 \in A_2$ and
$$\mbox{dist}(v_2, \gamma_0) \leq 1 + \mbox{dist}(v_1, \gamma_0) \leq 1 + t/8 \leq t/4$$ for $t \geq 8$,
there exists $x \in \gamma_a$ such that
\[
\mbox{dist}(v_2, x) = \mbox{dist}(v_2, \gamma_a) < \mbox{dist}(v_2, \gamma_0) \leq 1 + t/8.
\]
Similarly there exists $y \in \gamma_b$ such that $\mbox{dist}(v_3, y) \leq 1 + t/8$, so
we have
\begin{equation}\label{vv}
\mbox{dist}(x,y) \leq\mbox{dist}(x,v_2) + \mbox{dist}(v_2,v_3) + \mbox{dist}(v_3,y) < 3 + t/4.
 \end{equation}
On the other hand, $\gamma$ is a geodesic segment and $B(z, t/4)$
separates $\gamma_a$ and $\gamma_b$. Then
because $x \in \gamma_a$ and $y \in \gamma_b$,
we must have $\mbox{dist}(x,y) \geq t/2$.
This contradicts \eqref{vv} for $t \geq 12$, so the claim follows.

Note that
\[
|\partial_e \Gamma| = \mbox{length}(\varphi) + \mbox{length}(\gamma) \leq 2 \cdot \mbox{length}(\varphi).
\]
Moreover because $v_1 \in V(\Gamma)$ and $B \cap (\varphi \cup \gamma) = \emptyset$ for $t \geq 12$,
we must have $B \subset \Gamma$ in this case. Thus if $t \geq 12$ and $8n \leq t < 8(n+1)$ for some $n \in \mathbb{N}$, then
\[
|V(\Gamma)| \geq |V(B)| \geq \bigl( 1+\jmath(G) \bigr)^n \geq \bigl( 1+\jmath(G) \bigr)^{t/8-1}.
\]
Now because $|\partial_e \Gamma| \geq |\partial_v \Gamma|$, we have
\[
\mbox{length}(\varphi) \geq \frac{1}{2} |\partial_e \Gamma| \geq \frac{1}{2} |\partial_v \Gamma|
\geq \frac{1}{2}\jmath(G) |V(\Gamma)| \geq \frac{1}{2}\jmath(G)\bigl( 1+\jmath(G) \bigr)^{t/8-1}.
\]
This proves that $g_G (t) / t \to \infty$ as $t \to \infty$, where $g_G$ is the detour growth function
introduced at the beginning of this section. We conclude that $G$ is Gromov hyperbolic by \cite{Bonk}.

We remark here that the above argument can be considered an alternative proof of (a part of) Theorem~2.1 in
\cite[Chap.~6]{CDP}, where it is proved that every reasonable complete simply connected Riemannian manifold
satisfying a linear isoperimetric inequality must be Gromov hyperbolic. In fact, one only needs to
modify some terminologies in the above proof so that they are adequate to the continuous case, and use the continuous
version of Sperner's lemma \cite[p.~378]{AH} instead of the combinatorial version. Since it is irrelevant to
our subject, we omit the details here.

To prove Theorem~\ref{TG}(b), let us construct a normal planar graph $G_1$ which is Gromov hyperbolic
but  $\jmath(G_1)=\kappa(G_1)=0$ . Let $\Gamma_1$ be a triangulation of the plane such that $\deg v \geq 7$ for all
$v \in V(\Gamma_1)$. Then obviously $\Gamma_1$ is Gromov hyperbolic.
Choose a sequence of edges $e_n =[a_n, b_n] \in E(\Gamma_1)$ which are far away from each other,
for instance $\mbox{dist}(a_n, a_m) \geq 10$ for $n \ne m$,
and let $f_n$ and $g_n$ be the triangular faces of $\Gamma_1$ sharing  $e_n$. Also let
$c_n$ and $d_n$ be the vertices of $f_n$ and $g_n$, respectively, which are not lying  on $e_n$.

We obtain a new graph from $\Gamma_1$ by replacing $e_n$ by $n$-multiple edges
and drawing a line from $c_n$ to $d_n$ (Figure~\ref{gromov2}). We do this operation for all $n \in \mathbb{N}$, and let
$G_1$ denote the resulting graph. $G_1$ is obviously normal. Moreover,  $G_1$ is Gromov hyperbolic
because it is roughly isometric to $\Gamma_1$. It is also easy to see that $\jmath(G_1)=\kappa(G_1)=0$, since if we let
$S_n$ be the subgraph of $G_1$ such that $V(S_n)$ consists of $a_n, b_n, c_n, d_n$, and all the added vertices
in $f_n \cup g_n$, then we have $|F(S_n)| = 2n+2$ and $|\partial_e S_n| =4$. This completes the proof
of Theorem~\ref{TG}(b).
\begin{figure}[hbt]
\begin{center}
 \input{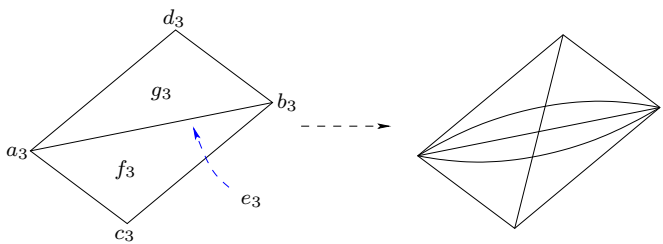}
 \caption{replacing $e_3$ with triple edges and drawing a diagonal line}\label{gromov2}
\end{center}
\end{figure}

The graph $G$ constructed in Section~\ref{Scur} also satisfies the properties in
Theorem~\ref{TG}(b) except being normal. To explain this, first note that we could construct $G$ so that every face is of degree
at most 4. Now if we let $S$ be the \emph{infinite} subgraph of $G$ such that
\[
V(S) = V(G) \setminus \{ v_k^n : n \in \mathbb{Z} \mbox{ and } 1 \leq k \leq 2|n| \},
\]
then one can check that $S$ is a planar graph of bounded face degree such that every vertex has degree at least 7. Thus
$S$ is Gromov hyperbolic by Corollary~\ref{Cor} and Theorem~\ref{TG}, so $G$ is also
Gromov hyperbolic since it is roughly isometric to $S$.

For the last, we prove Theorem~\ref{TG}(c); i.e., construct a normal planar graph $G_2$ of bounded
face degree such that $\imath(G_2)>0$ but not Gromov hyperbolic. The main idea here is to construct a graph
with a structure far from being hyperbolic, but the simple random walk on it is transient.

We start with the square lattice graph $\Gamma_2$; i.e., $V(\Gamma_2) = \mathbb{Z} \times \mathbb{Z}$,
and $\Gamma_2$ has an edge between $(n_1, m_1)$ and $(n_2, m_2)$ if and only if $|n_1 - n_2| + |m_1 - m_2| =1$.
Let $O=(0,0)$ be the origin, and for each $n \in \mathbb{N}\cup\{0\}$ define $V_n$
as the set of vertices of $\Gamma_2$ whose combinatorial distance from $O$ is equal to $n$. Also for each $n \in \mathbb{N}$, let
 $E_n$ be the set of edges of $\Gamma_2$ connecting a vertex
in $V_{n-1}$ to another one in $V_n$.
Then as in Theorem~\ref{TG}(b) we replace each edge in $E_n$ by $\ell_n$-multiple edges, where
the sequence $\{ \ell_n \}$ increases to infinity very fast but will be determined later.
Finally we draw the lines $x=m + 1/2$ and $y = m+1/2$
for all $m \in \mathbb{Z}$. We can do this operation so that each vertical line $x=m+1/2$ meets
no vertical edges, and meets every horizontal edge at most once. Of course
 we can draw the horizontal lines $y= m+1/2$ in a similar way, and let $G_2$ denote the obtained graph (Figure~\ref{gromov3}).

\begin{figure}[t!]
\begin{center}
 \input{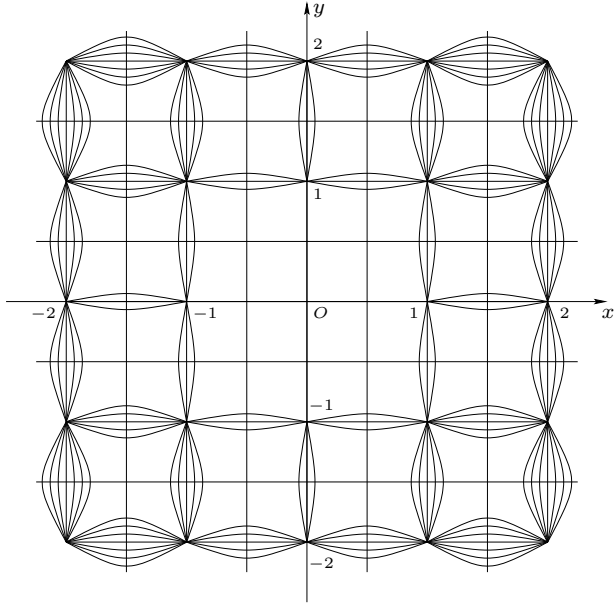}
 \caption{the graph $G_2$; it is drawn with $\ell_n =2n-1$ for aesthetic reasons, but $\ell_n$ should
 increase faster}\label{gromov3}
\end{center}
\end{figure}

It is clear that $G_2$ is not Gromov hyperbolic, since it is roughly isometric to $\Gamma_2$.
Also one can immediately see that $G_2$ is normal, face degrees of $G_2$ are bounded above by 4,
and $\jmath(G_2) = \kappa(G_2) =0$.
So we only need to show that $\imath(G_2)>0$.
Define $h:V(\Gamma_2) \to V(G_2)$ as the natural injection which maps each integer lattice
point to itself, and suppose that $S$ is a finite subgraph of $G_2$. Let $N$ be the largest natural number
such that $h(V_N) \cap V(S) \ne \emptyset$,
and $W_1$ be the set of vertices of $S$ which lies in the region
$\{(x,y): |x| + |y|<N+1/10\}$. If no such $N$ exists, we set $W_1 = \emptyset$.
We also define $W_2$ as the set of vertices in $V(S) \setminus W_1$ lying
on the intersection of two lines $x= k_1+1/2$ and $y=k_2+1/2$ for some $k_1, k_2 \in \mathbb{Z}$, and let
$W_3 = V(S) \setminus (W_1 \cup W_2)$.

The vertices in $W_3$ must be on the `middle' of some multiple edges, so each of them has a neighbor in $V_{l}$
for some $l \geq N+1$.
Consequently $W_3 \subset \partial_v S$ and $|W_3| \leq |\partial_v S| \leq |\partial S|$. For $W_2$,
the only neighbors of a vertex $v \in W_2$ are those at the `middle' of some multiple edges.
Then since $(k_1 + 1/2)+(k_2+1/2) \geq N+1/10$ implies $k_1 + k_2 + 1/2 \geq N+1/10$ for $k_1, k_2 \in \mathbb{N}$,
every $v \in W_2$ is either in $\partial_v S$ or has a neighbor in $W_3$. Thus
$|W_2| \leq |W_3| + |\partial_v S| \leq 2 |\partial S|$.

Suppose $W_1 \ne \emptyset$, and let $v \in h(V_N) \cap V(S)$.
Also let us assume that there are $k$ edges in $\partial S$ with one end
at $v$. Then by our construction
$v$ must have at least  $\ell_{N+1} - k$ neighbors in $W_3$, all of which are in $\partial_v S$.
So we must have $|\partial S| \geq \ell_{N+1}$. On the other hand, if we denote by
$C_n$ the number of vertices in $G_2$ lying in the region $\{(x,y): |x| + |y|<n+1/10\}$,
then definitely $C_n$ depends only on $\ell_1, \ldots, \ell_n$ and $n$.
Thus we can choose the sequence $\{ \ell_n \}$ so that $\ell_{n+1} \geq C_n$ for all $n \in \mathbb{N}$.
This in particular implies that $|W_1| \leq C_N \leq \ell_{N+1} \leq |\partial S|$, and note that
the inequality $|W_1| \leq |\partial S|$ is obviously true when $W_1 = \emptyset$.

So far we have shown that $|V(S)|=|W_1|+|W_2|+|W_3| \leq 4 |\partial S|$, but this is enough to conclude
$\imath(G_2) >0$ by considering the duality property of Lemma~\ref{main lemma}. This completes the proof
of Theorem~\ref{TG}.

\section{Further remarks}
One of the main assumptions of Theorem~\ref{T}(b) is that $G$ is embedded into the plane locally finitely,
so it must be a planar graph with only \emph{one end}. Recently we have extended this result
to the case when $G$ has finitely many ends \cite{OS}. Furthermore, if $G$ is normal,
Theorem~\ref{T}(b) remains valid even when $G$ has infinitely many ends.

\end{document}